\newcommand{\black}{\color[rgb]{0,0,0}}
\newcommand{\red}{\color[rgb]{1,0,0}}
\newcommand{\eps}{\varepsilon}
\newcommand{\cF}{\mathcal F}
\newcommand{\cK}{\mathcal K}
\newcommand{\cC}{\mathcal C}
\newcommand{\cL}{\mathcal L}
\newcommand{\cO}{\mathcal O}
\newcommand{\var}{\mathrm{Var}}
\newcommand{\grad}{\nabla}
\newcommand{\sfrac}[2]{\mbox{$\frac{#1}{#2}$}}
\newcommand{\rd}{d}
\newcommand{\loja}{\L ojasiewicz}
\newcommand{\CL}{\L}
\newcommand{\IB}{\mathbb B}
\newcommand{\II}{\mathbb I}
\newcommand{\cov}{\mathrm{cov}}
\newcommand{\1}{1\hspace{-0.098cm}\mathrm{l}}
\renewcommand{\P}{{\mathbb P}}
\newcommand{\N}{{\mathbb N}}
\newcommand{\IL}{{\mathbb L}}
\newcommand{\IC}{{\mathbb C}}
\newcommand{\E}{{\mathbb E}}
\newcommand{\R}{{\mathbb R}}
\newcommand{\Hess}{\text{Hess}\,}
\newcommand{\tr}{\mathrm{tr}}
\theoremstyle{plain}
\newtheorem{theorem}{Theorem}[section]
\newtheorem{prop}[theorem]{Proposition}
\newtheorem{lemma}[theorem]{Lemma}
\newtheorem{cor}[theorem]{Corollary}
\newtheorem{defi}[theorem]{Definition}
\newtheorem*{nota}{Notation}
\theoremstyle{definition}
\newtheorem{rem}[theorem]{Remark}
\begin{document}
	
	\title[Convergence of SDE]%
	{Cooling down stochastic differential equations: \\ almost sure convergence}
	
	\author[]
	{Steffen Dereich}
	\address{Steffen Dereich\\
		Institut f\"ur Mathematische Stochastik\\
		Fachbereich 10: Mathematik und Informatik\\
		Westf\"alische Wilhelms-Universit\"at M\"unster\\
		Orl\'eans-Ring 10\\
		48149 M\"unster\\
		Germany}
	\email{steffen.dereich@wwu.de}
	
	\author[]
	{Sebastian Kassing}
	\address{Sebastian Kassing\\
		Institut f\"ur Mathematische Stochastik\\
		Fachbereich 10: Mathematik und Informatik\\
		Westf\"alische Wilhelms-Universit\"at M\"unster\\
		Orl\'eans-Ring 10\\
		48149 M\"unster\\
		Germany}
	\email{sebastian.kassing@wwu.de}
	
	\keywords{Stochastic gradient flow;  Brownian particle; cooling down; \loja-inequality; almost sure convergence}
	\subjclass[2020]{Primary 60J60; Secondary 60H10, 65C35}
	
	\begin{abstract}
		We consider almost sure convergence of the SDE 
		$
		\rd X_t=\alpha_t \, \rd t + \beta_t \, \rd W_t
		$
		under the existence of a $C^2$-Lyapunov function $F:\R^d \to \R$. 
		More explicitly, we show that on the event that the process stays local we have almost sure convergence in the Lyapunov function $(F(X_t))_{ t \ge 0}$ as well as $\nabla F(X_t)\to 0$, if $|\beta_t|=\cO( t^{-\beta})$ for  a $\beta>1/2$. If, additionally, one assumes that $F$ is a \CL ojasiewicz-function, we get almost sure convergence of the process itself, given that $|\beta_t|=\cO(t^{-\beta})$ for a  $\beta>1$. The assumptions are shown to be optimal in the sense that there is a divergent counterexample where $|\beta_t|$ is of order $t^{-1}$.
		\end{abstract}

	\maketitle
	
	\section{Introduction}
	Let $(\Omega, (\cF_t)_{t \ge 0}, \cF,\P)$ be a filtered probability space satisfying the usual conditions and suppose that $(X_t)_{t\ge0}$ is a continuous semimartingale  satisfying  the equation
	\begin{align}\label{SDE}
	X_t-X_0 = \int_0 ^t  \alpha_s \, ds +\int_0^t  \beta_s \, dW_s,
	\end{align}
	for all $t\ge0$, 
	where
	\begin{itemize}
		\item $(\alpha_t)_{t \ge 0}$ is a progressive,  $\R^d$-valued process,
		\item $(\beta_t)_{t \ge 0}$ is a progressive $\R^{d \times d'}$-valued process and
		\item $(W_t)_{t \ge 0}$ is an $\R^{d'}$-valued $(\cF_t)_{t \ge 0}$-Brownian motion with  initial value $W_0=0$.
	\end{itemize}
	
	In this article, we analyse convergence of  $(X_t)_{t \ge 0}$ under the assumption that the drift term satisfies a Lyapunov kind of condition and the diffusivity converges to zero sufficiently fast.
	More explicitly, we denote by $F: \R^d \to \R$  a $C^2$-function, by $f :=\grad F$ its gradient and by  $H:=\Hess F$ the Hessian of $F$ and consider the following event
	\begin{align*} 
	\IC := \Bigl\{\liminf_{t\to\infty} &\frac{\langle f(X_t),-\alpha_t\rangle}{|f(X_t)|^2} >0 ,\, \liminf_{t \to \infty}\frac {|f(X_t)|}{|\alpha_t|}>0, \\
	& \quad \int_0^\infty |\beta_s|_F^2 \, \rd s<\infty \, \text{ and}\	\limsup\limits_{t \to \infty } |\beta_t| < \infty \Bigr\},
	\end{align*}
	where we interpret $\frac 00$ as  $\infty$  and, for a matrix $A \in \R^{d \times d'}$, $|A|_F$ (resp. $|A|$) denotes the Frobenius norm (resp. spectral norm).
	Intuitively, on $\IC$ the drift term $(\alpha_t)_{t \ge 0}$ is comparable in size and direction to the negative gradient of the Lyapunov function, i.e.,  $(-\nabla F(X_t))_{t \ge 0}$, and the size of the diffusivity $(\beta_t)_{t \ge 0}$ can be controlled at late times. The first and second condition in the definition of the event $\IC$ are obviously satisfied if we consider the solution to the SDE
		\begin{align} \label{eq:SGFlow}
		\rd X_t = -\nabla F(X_t) \, \rd t + \beta_t \, \rd W_t,
		\end{align}
		which represents the canonical stochastic version of the gradient flow ODE
		$$
			\dot x_t =-\grad F(x_t).
		$$
		However, the current framework allows more flexibility in the drift term. 
	We will restrict attention to certain events: set
	$$ 
	\IB:=\Biggl\{ \liminf_{t\to\infty} F(X_t)>-\infty ,\,  \limsup_{t \to \infty} |\alpha_t|<\infty \text{ and }  \limsup_{\eps\to0} \limsup_{\substack{t\to\infty \\ y\in B(X_t,\eps)}}  |H(y)| <\infty \Biggr\}
	$$
	and
	$$
	\IL:=\Bigl\{\limsup_{t\to\infty} |X_t|<\infty \Bigr\}.
	$$
	We separately show  almost sure convergence of $(F(X_t))_{t \ge 0}, (f(X_t))_{t \ge 0}$ and $(X_t)_{t \ge 0}$ 
	under mild assumptions on the diffusivity and the underlying Lyapunov function which are shown to be optimal (c.f.\ Chapter~\ref{sec:optimal}).   The SDE (\ref{eq:SGFlow}) is  the continuous-time counterpart for the stochastic gradient descent scheme (when $(\beta_t)_{t \ge 0}$ is chosen properly, see \cite{WeinanE2017}) and, thus, a helpful tool for gaining intuition and insight on the behaviour of SGD. It also appears naturally in applications from Physics and Chemistry. 
		For example, (\ref{eq:SGFlow}) can be viewed as the pathwise solution to the Fokker--Planck equation (see \cite{gardiner1985handbook}) in a physical system, where  $F$ models the energy and $(\beta_t)_{t \ge 0}$ an external source of perturbation\footnote{$(2/\beta_t^2)_{t \ge 0}$ is sometimes referred to as the \emph{inverse temperature}, e.g.~\cite{chiang1987diffusion, jordan1998variational}}, or as the chemical Langevin equation describing a chemical reaction evolving in time (\cite{van1992stochastic}, \cite{gillespie2000chemical}, \cite{schnoerr2014complex}). In our analysis, we allow general non-convex energy landscapes and focus on the case where the perturbation fades within time. 

There are several asymptotic results for the solution $(X_t)_{t \ge 0}$ of (\ref{eq:SGFlow}) when choosing a  deterministic diffusivity $(\beta_t)_{t \ge 0}$.  
Inspired by simulated annealing, \cite{geman1986diffusions} and \cite{chiang1987diffusion} proved that for $(\beta_t)_{t \ge 0}=(\sqrt{2c/ \log (t+1)})_{t \ge 0}$ with sufficiently large $c>0$ and under additional assumptions on the target function $F$, the solution of (\ref{eq:SGFlow}) converges weakly to a distribution $\pi$ concentrated on the global minima of $F$ independent of the initial value $X_0=x \in \R^d$, even in the case of non-convex target functions $F$. More precisely, $\pi$ is the weak limit of the Gibbs densities $\pi_t(x)\propto \exp(-2F(x)/\beta^2_t)$. In this regime, \cite{holley1989asymptotics}  showed that if the diffusion process is defined on a compact manifold, then $(F(X_t))_{t \ge 0}$ converges to the optimal value in probability if and only if $c>c^*$, where $c^*$ denotes the minimal potential energy necessary to connect every point with a global minimum using a continuous path. Recently, this result has been generalised by \cite{fournier2021simulated} to diffusions 
on $\R^d$ in the case where $F \in C^\infty$ is assumed to satisfy $\lim_{|x| \to \infty} F(x)=\infty$ and $\int_{\R^d} e^{-\alpha_0 F(x)} \, \rd x<\infty$ for some $\alpha_0>0$. In that case one has $\liminf_{t \to \infty} |X_t|<\infty$, almost surely.

However, in this particular scenario we cannot hope for the individual paths of the solution $(X_t)_{t \ge 0}$ to converge. Conversely, the diffusivity $(\beta_t)_{t \ge 0}$ considered here converges significantly faster to zero. Therefore, we cannot guarantee that the process $(X_t)_{t \ge 0}$ started in the basin of attraction of a bad local minimum is able to overcome a barrier in the landscape in order to converge to a global minimum (see Section~\ref{sec33}) and the distribution of the limit points of $(X_t)_{t \ge 0}$ depends heavily on the distribution of the initial value $X_0$. 
As we will indicate below, the gradient flow converges if $F$ is a  \loja-landscape, 
in the sense of Definition~\ref{def:loja}, and the gradient flow does not escape to infinity. The main contribution of this article is the analysis in how far this is still the case when additionally incorporating a stochastic perturbation.
We stress that we analyse the asymptotic behaviour of the system as time tends to infinite which is significantly different from the analysis of families of SDEs where the diffusivity decays with a family index $\eps>0$, see e.g.\ \cite{freidlin2012random}.
%
	
	First, we will provide a short proof for almost sure convergence of $(F(X_t))_{t \ge 0}$ and $(f(X_t))_{t \ge 0}$ with  $\lim_{t\to\infty}f(X_t)=0$ 
	 on the event $\IC\cap \IB$. We conclude that we have a.s.\ convergence of $(X_t)_{t \ge 0}$ on the event $\IC \cap \IL$ in the case where where the set of critical points $\cC:=\{x \in \R^d:f(x)=0\}$ is at most countable.
%
%
	\begin{theorem}\label{theo1}
		On $\IC\cap \IB$, almost surely, $(F(X_t))_{t\ge0}$ converges to a finite value and $\lim_{t\to\infty} f(X_t)=0$ as well as $\limsup_{t_0 \to \infty } \int_{t_0}^\infty \langle f(X_s),-\alpha_s\rangle\,\rd s<\infty$.
		If the set of critical points of $F$ 
		is at most countable, then,  on $\IC \cap \IL$, almost surely, $(X_t)_{t \ge 0}$ converges.
	\end{theorem}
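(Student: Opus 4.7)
The plan is to apply Itô's formula to $F(X_t)$, which gives the decomposition
\begin{align*}
F(X_t) = F(X_0) + \int_0^t \langle f(X_s),\alpha_s\rangle\,\rd s + M_t + \tfrac12\int_0^t \tr\bigl(\beta_s^\top H(X_s)\beta_s\bigr)\,\rd s,
\end{align*}
where $M_t:=\int_0^t \langle f(X_s),\beta_s\,\rd W_s\rangle$ is a continuous local martingale with quadratic variation $\int_0^t |\beta_s^\top f(X_s)|^2\,\rd s$, and then to track each of these three summands on $\IC\cap\IB$. The key preliminary observation, extracted from the two ratio conditions defining $\IC$ together with Cauchy--Schwarz, is that there exist $c>0$ and a random time $T$ such that
\[
\langle f(X_s),-\alpha_s\rangle \ge c\,|f(X_s)|^2
\qquad\text{and}\qquad |f(X_s)|\le c^{-1}|\alpha_s|
\qquad (s\ge T);
\]
the second bound together with $\limsup|\alpha_s|<\infty$ from $\IB$ makes $|f(X_s)|$ eventually bounded.

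With this in hand I would verify, on $\IC\cap\IB$: (i) $\langle M\rangle_\infty$ is dominated by a constant times $\int_0^\infty|\beta_s|_F^2\,\rd s<\infty$, so $M_t$ converges a.s.\ by the standard convergence theorem for continuous local martingales with finite quadratic variation; (ii) the Itô correction is, up to a constant provided by the local Hessian bound in $\IB$, dominated by $\int|\beta_s|_F^2\,\rd s$ and hence converges; (iii) past time $T$ the drift integral $\int_0^t\langle f(X_s),\alpha_s\rangle\,\rd s$ is nonincreasing. Since $F(X_t)$ is bounded below on $\IB$ and equals the sum of a convergent martingale, a convergent Itô correction, and an eventually monotone term, the monotone term must itself converge. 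This simultaneously yields a.s.\ convergence of $(F(X_t))$ and $\int_T^\infty\langle f(X_s),-\alpha_s\rangle\,\rd s<\infty$, which is the last assertion of the first part; combined with $|f|^2\le c^{-1}\langle f,-\alpha\rangle$ it gives $\int_0^\infty|f(X_s)|^2\,\rd s<\infty$.

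To upgrade this to $f(X_t)\to 0$ I would invoke a Barbalat-type argument: the map $s\mapsto |f(X_s)|^2$ is nonnegative and integrable, so it suffices to prove pathwise uniform continuity. From the SDE, $\int_0^t\alpha_s\,\rd s$ is Lipschitz on $\IB$, while $\int_0^t\beta_s\,\rd W_s$ has finite quadratic variation on $\IC$, hence converges a.s.\ and extends continuously to $[0,\infty]$, making it uniformly continuous on $[0,\infty)$; thus $X_\cdot$ is uniformly continuous, and the local Hessian bound transfers this to $f(X_\cdot)$. For the final statement, on $\IC\cap\IL$ the path eventually lies in some random compact set, on a neighbourhood of which $F$, $f$, and $H$ are bounded; then $|\alpha|\le c^{-1}|f(X)|$ is bounded as well, so $\IC\cap\IL\subseteq\IC\cap\IB$ and the preceding gives $f(X_t)\to 0$. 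Consequently the $\omega$-limit set of $(X_t)$ is a nonempty compact connected subset of the critical set $\cC$, and since a connected countable subset of $\R^d$ must be a singleton, $(X_t)$ converges. I expect the main obstacle to be the Barbalat step: transferring uniform continuity of $X_\cdot$ to $f(X_\cdot)$ requires care, because $\IB$ only provides a Hessian bound local around the path rather than a global one.
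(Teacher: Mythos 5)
Your argument is correct and follows the same overall structure as the paper's proof (via Proposition~\ref{prop:conv1}): Itô decomposition of $F(X_t)$, convergence of the stochastic integral and the Itô correction, eventual monotonicity of the drift integral, a Barbalat-type argument for $f(X_t)\to 0$, and an $\omega$-limit-set argument for convergence of $X_t$. Two local steps differ in their technique. For the martingale term you bound $\langle M\rangle_\infty$ directly by $\sup |f(X_s)|^2\int_0^\infty|\beta_s|_F^2\,\rd s<\infty$, using the eventual boundedness of $|f(X_s)|$ that you extract from the first ratio condition in $\IC$ together with Cauchy--Schwarz and $\limsup|\alpha_t|<\infty$; the paper instead observes that on $\{\langle M\rangle_\infty=\infty\}$ one has $M_t-\frac{C_2}{C_1}\langle M\rangle_t\to-\infty$, which contradicts the lower bound on $F(X_t)$, so $\langle M\rangle_\infty<\infty$ without invoking $\int|\beta|_F^2\,\rd s<\infty$. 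The paper's version is slicker and also covers the strictly weaker event $\IC_1$ in Proposition~\ref{prop:conv1}, but on $\IC\cap\IB$ both approaches work equally well. For the final step you appeal to the standard fact that the $\omega$-limit set of a bounded continuous path is nonempty, compact and connected, and a connected countable subset of $\R^d$ is a singleton; the paper builds the required injection from an interval into $\cC$ by hand, using nested intervals in a single coordinate. These are essentially equivalent. Lastly, the worry you flag about the locality of the Hessian bound in the Barbalat step is unfounded: once $X_\cdot$ is shown to be uniformly continuous, for $|s-t|$ small the segment between $X_s$ and $X_t$ lies inside the $\eps$-tube around the path on which $\IB$ controls $|H|$, so the Lipschitz transfer to $f(X_\cdot)$ goes through; the paper's contradiction argument handles the same locality issue in essentially the same way.
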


	Theorem~\ref{theo1} is an immediate consequence of the more general Proposition~\ref{prop:conv1} below.
	
	\begin{rem}
		Note that as $F$ is $C^2$ we clearly have $\IC \cap \IL \subset \IC \cap \IB$ so that the first statement of Theorem~\ref{theo1} also holds on $\IC \cap \IL$. Moreover, for a Lyapunov function $F$ with $\liminf_{|x| \to \infty} |f(x)|>0$ the first statement implies that
		$$
		\IC \cap \IB = \IC \cap \IL.
		$$
	\end{rem}

	\begin{rem} 
		A standard approach for proving convergence statements similar to those given in Theorem~\ref{theo1} is the theory of asymptotic pseudotrajectories (see \cite{benaim1999dynamics}, especially Corollary~6.7). While these methods may lead to different and typically weaker assumptions on the perturbation $(\beta_t)_{t \ge 0}$, Theorem~\ref{theo1} will mainly be used to prove Theorem~\ref{theo2} below. In order to be self-contained we stated Theorem~\ref{theo1} in such a way that it can easily be applied in Theorem~\ref{theo2} and has a direct and brief proof.
	\end{rem}
	
	In the case where the set of critical points of $F$ contains a continuum of elements the situation is more subtle. Even in the case without random perturbations convergence of the solution $(x_t)_{t \ge 0}$ to the ODE
	\begin{align} \label{eq:ODE}
	\dot x_t =-\grad F(x_t)
	\end{align}
	 is a non-trivial issue: one can find $C^\infty$-functions $F$ together with solutions $(x_t)_{t\ge0}$  that stay on compact sets but do not converge,  see example 3 on page 14 of \cite{palis2012geometric}. Counterexamples of this structure have been known for a long time (see e.g.~\cite{curry1944method}) and include the famous Mexican hat function \cite{AMA05}. 
	To guarantee convergence (at least in the case where the solution stays on a compact set), one needs to impose additional assumptions. An appropriate assumption is the validity of a   \loja-inequality, see Definition~\ref{def:loja} below. This assumption has the appeal that it is satisfied by analytic functions, see~\cite{lojasiewicz1963propriete,lojasiewicz1965ensembles}. With the help of the inequality (\ref{eq:loja}) \CL ojasiewicz showed in \cite{lojasiewicz1984sur} that any integral curve of the gradient flow equation (\ref{eq:ODE}) for an analytic function $F$ that has an accumulation point has a unique limit. A proof of that assertion can also be found in \cite{haraux2012some} or \cite{colding2014}.
	
	In this article, we device a probabilistic counterpart to the classical analytic approach. We will derive moment estimates of $F(X_t)$ for  \L ojasiewicz-functions $F:\R^d\to \R$  on $\IC \cap \IL$ under the additional assumption that we can asymptotically bound the diffusivity $(\beta_t)_{t \ge 0}$ by a locally-bounded deterministic function $\sigma: [0,\infty) \to [0,\infty)$ to deduce almost sure convergence of $(\int_0^t \alpha_s \, \rd s)_{t \ge 0}$ and thus, almost sure convergence of $(X_t)_{t \ge 0}$.
	%
	%
	

	\begin{defi}\label{def:loja}
		We call the function $F:\R^d\to\R$ \emph{\loja-function}, if for every $x\in\cC:=f^{-1}(\{0\})$, the \loja-inequality is true on a neighbourhood $U_x$ of $x$ with parameters $\CL>0$ and $\theta\in[\frac 12,1)$, i.e., for all $y\in U_x$
		\begin{align} \label{eq:loja}
		|f(y)|\ge \CL\,|F(y)-F(x)|^\theta.
		\end{align}
	\end{defi}
	
	\begin{rem} 
		Note that for all $x \notin \mathcal C$ and $\theta \in [\frac 12, 1)$ there trivially exist a neighbourhood $U_x$ and constant $\CL>0$ such that the \loja-inequality with parameter $\CL$ and $\theta$ hold on $U_x$.
	\end{rem}
	
	The main contribution of this article concerns convergence of the SDE in the case where the Lyapunov function is a \loja-function.

	\begin{theorem} \label{theo2}
		Let $F$ be a \loja-function and $(\sigma_t)_{t\ge 0}=((t+1)^{-\sigma})_{t \ge 0}$ with $\sigma>1$. 
		Then, on $\IC \cap \IL\cap \bigl\{ \limsup_{t \to \infty} \frac{|\beta_t|}{\sigma_t}<\infty \bigr\}$, the process $(X_t)_{t \ge 0}$ satisfying~(\ref{SDE}) converges, almost surely, to a critical point of $F$. 
	\end{theorem}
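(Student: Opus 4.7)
The plan is to work almost surely on the event $\Omega_0 := \IC \cap \IL \cap \bigl\{\limsup_{t\to\infty}|\beta_t|/\sigma_t<\infty\bigr\}$. Since $\IC \cap \IL \subseteq \IC \cap \IB$, Theorem~\ref{theo1} supplies on $\Omega_0$ a finite limit $F_\infty := \lim_{t\to\infty} F(X_t)$, the statement $f(X_t)\to 0$, and $\int_0^\infty \langle f(X_s),-\alpha_s\rangle\,\rd s<\infty$; combined with the two ratio conditions in $\IC$ this upgrades to $\int_0^\infty |f(X_s)|^2\,\rd s<\infty$ and $\int_0^\infty |\alpha_s|^2\,\rd s<\infty$. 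Because $\int_0^\infty |\beta_s|_F^2\,\rd s<\infty$, the stochastic integral $M_t:=\int_0^t \beta_s\,\rd W_s$ converges almost surely to some $M_\infty\in\R^d$, and the decomposition $X_t-X_0=\int_0^t\alpha_s\,\rd s+M_t$ reduces the whole task to proving $\int_0^\infty |\alpha_s|\,\rd s<\infty$ on $\Omega_0$. Granted this, the limit set $L(\omega)$ of $X_t$ --- compact, non-empty, contained in $\cC\cap F^{-1}(F_\infty)$ since $f(X_t)\to 0$ --- is automatically collapsed to the one point to which $X_t$ converges, which is then a critical point.

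For this reduction I use a stopping-time confinement argument. Fix any $x^*\in L(\omega)$; by the Łojasiewicz hypothesis there exist $r_0>0$, $\CL>0$ and $\theta\in[\tfrac12,1)$ with $|f(y)|\geq \CL|F(y)-F_\infty|^\theta$ on $\overline{B(x^*,2r_0)}$, on which $|H|\leq K$. Given small $\delta>0$, I choose a large random time $T_0=T_0(\delta)$ so that on a sub-event $\Omega_\delta\subseteq\Omega_0$ of probability $\geq\P(\Omega_0)-\delta$ one has $X_{T_0}\in B(x^*,r_0/4)$, $\sup_{t\geq T_0}|F(X_t)-F_\infty|<\delta$, $\sup_{t\geq T_0}|M_t-M_{T_0}|<\delta$, $\int_{T_0}^\infty|\beta_s|_F^2\,\rd s<\delta$, the ratio inequalities of $\IC$ hold with fixed positive constants $c_1,c_2$ on $[T_0,\infty)$, and $|\beta_s|\leq 2\sigma_s$ there. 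Define $\tau:=\inf\{t\geq T_0:X_t\notin B(x^*,r_0)\}$. It suffices to show that $\int_{T_0}^\tau|\alpha_s|\,\rd s$ can be made arbitrarily small with $\delta$: combined with $|M_\tau-M_{T_0}|<\delta$ that yields $|X_\tau-X_{T_0}|<r_0/2$, contradicting $\tau<\infty$; hence $\tau=\infty$ on $\Omega_\delta$, $\int_{T_0}^\infty|\alpha_s|\,\rd s<\infty$ there, and letting $\delta\downarrow 0$ transfers the conclusion to $\Omega_0$.

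The workhorse is an Itô--Łojasiewicz estimate on $[T_0,\tau]$ applied to the smoothed Łojasiewicz potential $\phi_\epsilon(V_t)$, where $V_t:=F(X_t)-F_\infty$ and $\phi_\epsilon\in C^2(\R)$ is a standard smoothing of $r\mapsto (|r|+\epsilon)^{1-\theta}$, with $\epsilon>0$ to be tuned. The Łojasiewicz inequality, together with $\langle f,\alpha\rangle\leq -c_1|f|^2$ and $|\alpha|\leq c_2|f|$, converts the leading drift on $\{|V_s|\geq\epsilon\}$ into $\phi_\epsilon'(V_s)\langle f(X_s),\alpha_s\rangle\leq -\kappa|\alpha_s|$ for some $\kappa=\kappa(c_1,c_2,\CL,\theta)>0$. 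Isolating $\int_{T_0}^\tau|\alpha_s|\,\rd s$ after integration and rearrangement of Itô's formula produces a bound of the form
\begin{equation*}
\int_{T_0}^\tau |\alpha_s|\,\rd s \;\leq\; C\Bigl(\phi_\epsilon(V_{T_0}) + \epsilon^{1-\theta} + \int_{T_0}^\infty |\phi_\epsilon'(V_s)|\,|\beta_s|_F^2\,\rd s + \sup_{T_0\leq t\leq\tau}|N_t-N_{T_0}|\Bigr),
\end{equation*}
where $N$ is the local martingale $\int_0^\cdot \phi_\epsilon'(V_s)\langle f(X_s),\beta_s\,\rd W_s\rangle$, the contribution of $\{|V_s|<\epsilon\}$ is absorbed into the $\epsilon^{1-\theta}$-term, and the Itô second-order correction from $\phi_\epsilon''|\beta^\top f|^2$ is dropped using its favorable sign. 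The main obstacle lies in the coupling $\epsilon=\epsilon(T_0)\downarrow 0$: the two error terms scale roughly as $\epsilon^{-\theta}\int_{T_0}^\infty|\beta_s|^2\,\rd s\sim \epsilon^{-\theta}T_0^{1-2\sigma}$, and the hypothesis $\sigma>1$ is exactly what permits a joint tuning of $\epsilon$ and $T_0$ for which every term vanishes as $T_0\to\infty$ --- this balance fails precisely at $\sigma=1$, matching the optimality counterexample announced in the introduction.
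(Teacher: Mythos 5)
Your overall strategy---reduce the theorem to $\int_0^\infty|\alpha_s|\,\rd s<\infty$ via the semimartingale decomposition, confine the path to a \loja\ ball around an accumulation point, and then run an It\^o version of the deterministic \loja\ descent estimate on the potential $\phi_\epsilon(V_t)$---is the right heuristic, and the first two reductions are correct. However, there is a genuine gap precisely at the point where the paper introduces its ``lower dropout'' machinery.

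The concrete problem is the sign of $\phi_\epsilon'(V_s)\langle f(X_s),\alpha_s\rangle$ when $V_s<0$. With $\phi_\epsilon$ a smoothing of $r\mapsto(|r|+\epsilon)^{1-\theta}$ one has $\phi_\epsilon'(r)<0$ for $r<0$ while $\langle f(X_s),\alpha_s\rangle\le 0$, so on $\{V_s\le-\epsilon\}$ the product is \emph{nonnegative}, not $\le-\kappa|\alpha_s|$ as you assert; your ``conversion of the leading drift'' therefore holds only on $\{V_s\ge\epsilon\}$. Unlike the deterministic gradient flow, the path $V_s$ does undershoot $0$ through the noise, so these excursions cannot be ignored. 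Even on $\{0\le V_s<\epsilon\}$ the conversion degenerates: $|f|\ge\CL V_s^\theta$ yields only the prefactor $V_s^\theta/(V_s+\epsilon)^\theta$, which is not bounded below, while the occupation time of $\{V_s<\epsilon\}$ up to $\tau$ is not under control; the claim that this contribution is ``absorbed into $\epsilon^{1-\theta}$'' is not justified. Finally, $\sup_t|N_t-N_{T_0}|$ is a random quantity that requires a uniform-in-time concentration estimate to be usable pathwise (the role of Lemma~\ref{le:mart}); since $N$ already depends on the choice of $\epsilon$ and $T_0$, one cannot simply fold it into the defining conditions of $\Omega_\delta$ without circularity.

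The paper's proof resolves precisely these two difficulties. It stops the process at the first lower dropout $F(X_t)<\ell-w_t$ with a shrinking threshold $w_t$, so that $Z_t:=\1_{\{T>t\}}(F(X_t)-\ell+w_t)\ge0$; it then proves the \emph{expectation} bound $\E[Z_t]\le\Phi^{(R)}_t+\alpha v_t$ (Proposition~\ref{prop:loja1}) by a comparison ODE, and deduces $\E\bigl[\int\1_{\{s<T\}}\sqrt{\langle f(X_s),-\alpha_s\rangle}\,\rd s\bigr]\le\int\sqrt{-\dot\Phi_s}\,\rd s$ by the weighted Cauchy--Schwarz argument of Proposition~\ref{prop83256}. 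Lemma~\ref{le:47236} (built on Lemma~\ref{le:mart}) shows dropouts are rare, and Lemma~\ref{lem:lojaset} glues the localisations. The role of $\sigma>1$ is to make $\theta_0:=\sigma/(2\sigma-1)<1$ so that a usable $\theta\in(\theta_0,1)$ exists and $\int_0^\infty\sqrt{-\dot\Phi_s}\,\rd s<\infty$---this is where the hypothesis enters, not in the $\epsilon$--$T_0$ coupling you propose, and your analysis would in fact close formally at $\sigma=1$, which contradicts the counterexample of Section~\ref{sec:optimal}.
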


	The proof of the theorem is accomplished in Section~\ref{sec3}.
	The theorem is optimal in the sense that its conclusion is not true for the choice $(\sigma_t)_{t \ge 0}=((t+1)^{-1})_{t \ge 0}$.
	More explicitly,  we provide an SDE for which $\IC \cap \IL\cap \bigl\{ \limsup_{t \to \infty} (t+1)   |\beta_t|<\infty \bigr\}$ is an almost sure set and for which $(X_t)_{t \ge 0}$ diverges, almost surely,  in Section~\ref{sec:optimal}.
	
	\begin{rem} 
		Choose $(\alpha_t)_{t \ge 0}=(-f(X_t))_{t \ge 0}$ and consider the solution to the SDE (\ref{eq:SGFlow}). In that case we have for $(\sigma_t)_{t \ge 0}$ as in Theorem~\ref{theo2} that
		$
			\bigl\{ \limsup_{t \to \infty} \frac{|\beta_t|}{\sigma_t}<\infty \bigr\} \subset \IC.
		$
		Now, assume that $F \in C^2$ satisfies $\lim_{|x| \to \infty} F(x)=\infty$. In Appendix~\ref{sec:app} we derive sufficient conditions that guarantee locality of the process $(X_t)_{t \ge 0}$. In particular, we show that if the Hessian of $F$ is uniformly bounded (i.e. $\sup_{x \in \R^d}|H(x)|<\infty$) then $\bigl\{\int_0^\infty |\beta_s|_F^2 \, \rd s<\infty \bigr\} \subset \IL$.
		Also, we get for any compact set $K \subset \R^d$
		\begin{align*}
			\Bigl\{ &\1_{\{X_t \notin K\}} (-|f(X_t)|^2+\frac 12 \tr ((\beta_t)^\dagger H(X_t) \beta_t) )\le 0  \text{ for all large } t \\ 
			&\text{ and } \int_0^\infty \1_{\{X_s \in K\}}|\beta_s|_F^2 \, \rd s <\infty \Bigr\}\subset \IL.
		\end{align*}
	\end{rem}

	 The proof of Theorem~\ref{theo2} is based on  moment estimates for  $F(X_t)$ restricted to particular events, see Proposition~\ref{prop:loja1} below. In the particular case where we approach a local minimum these estimates entail the following moment estimate. 

	\begin{theorem} \label{the:rate} 
		Let $K\subset \R^d$ be a compact set, $\CL>0$, $\theta \in (1/2,1)$, $\ell\in\R$, $C_\beta>0$ and $\sigma\in[ 0,\infty)$. Suppose that  for all $y \in K$
		$$
		F(y)\ge \ell \ \text{and } \ |f(y)|\ge \CL (F(y)-\ell)^\theta.
		$$
		Consider  the stopping time 
		$$
		T:= \inf\Bigl\{ t\ge 0 : X_t \notin K, \frac{\langle f(X_t),-\alpha_t\rangle}{|f(X_t)|^2}\le \rho \ , \ |\beta_t|\ge C_\beta (t+1)^{-\sigma} \Bigr\}.
		$$
		Then there exists $C\ge 0$ so that for all $t\ge 0$
		$$
		\E[\1_{\{T > t\}} (F(X_t)-\ell) ]\le C (t+1)^{-(\frac \sigma\theta \wedge \frac{1}{2\theta-1})}.
		$$
	\end{theorem}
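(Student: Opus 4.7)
The plan is to apply Itô's formula to $G_t := F(X_t) - \ell$, stop at $T$, take expectations, and reduce the statement to a comparison with the companion ODE $\dot y = -\rho\CL^2 y^{2\theta} + C'(t+1)^{-2\sigma}$, whose positive solutions decay at the claimed rate $(t+1)^{-\alpha}$ with $\alpha = \sigma/\theta \wedge 1/(2\theta-1)$.

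On the event $\{t<T\}$ one has $X_t \in K$, so the Hessian $H$ is uniformly bounded on $K$ by some $C_H$, the Lojasiewicz inequality gives $|f(X_t)|^2 \ge \CL^2 G_t^{2\theta}$, the angle condition built into $T$ yields $\langle f(X_t), \alpha_t\rangle \le -\rho\,|f(X_t)|^2$, and $|\beta_t| \le C_\beta(t+1)^{-\sigma}$. Putting these into Itô's formula and using $|\tfrac12\tr(\beta_t^\top H(X_t)\beta_t)| \le C'(t+1)^{-2\sigma}$ yields on $\{t<T\}$
$$ dG_t \le -\rho\CL^2\, G_t^{2\theta}\,dt + C'(t+1)^{-2\sigma}\,dt + dM_t,$$
where $M$ is a local martingale whose quadratic variation is bounded on bounded time intervals (since $|f|$ is bounded on $K$). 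Optional stopping at $T$ and taking expectations — using that $X_{t\wedge T}\in K$ by continuity of $X$ and closedness of $K$, hence $G_{t\wedge T}\ge 0$ — gives $\E[G_{t\wedge T}] + \rho\CL^2 \E\!\int_0^{t\wedge T} G_s^{2\theta}\,ds \le G_0 + C'\int_0^t (s+1)^{-2\sigma}\,ds$. Writing $h(t):=\E[\1_{\{T>t\}}(F(X_t)-\ell)]$, one has $h(t)\le\E[G_{t\wedge T}]$, and Jensen's inequality (convexity of $x\mapsto x^{2\theta}$ for $2\theta\ge 1$) gives $\E[\1_{\{s<T\}}G_s^{2\theta}]\ge h(s)^{2\theta}$. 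Thus
$$ h(t) + \rho\CL^2\int_0^t h(s)^{2\theta}\,ds \;\le\; G_0 + C'\int_0^t(s+1)^{-2\sigma}\,ds.$$

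The remaining and principal step is extracting the pointwise decay $h(t)\le C(t+1)^{-\alpha}$ from this integral inequality. The two exponents in the $\wedge$ correspond to two regimes of the companion ODE: plugging the ansatz $y = K(t+1)^{-\alpha}$ in, one verifies for $K$ large enough that $y$ is a supersolution — in the dissipation-dominated regime $\alpha=1/(2\theta-1)$ the identity $\alpha+1 = 2\theta\alpha$ is decisive, while in the noise-dominated regime $\alpha=\sigma/\theta$ the balance $2\theta\alpha=2\sigma$ takes over. The main obstacle is that $h$ is only controlled by an integral inequality, not by an ODE, so a naive comparison is unavailable. I would handle this via a dyadic bootstrap over intervals $[2^n,2^{n+1}]$: on each interval, either $\rho\CL^2 h(s)^{2\theta}$ is already dominated by $C'(s+1)^{-2\sigma}$ (forcing $h\lesssim (t+1)^{-\sigma/\theta}$ directly from the right-hand side) or the Lojasiewicz dissipation forces a multiplicative decrease of $h$ of the type produced by $\dot y=-c y^{2\theta}$ (giving the dissipation bound $(t+1)^{-1/(2\theta-1)}$). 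Gluing the two regimes across dyadic blocks produces the claimed min-exponent.
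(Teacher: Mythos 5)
Your setup — It\^o's formula, the \L ojasiewicz bound, the angle condition, the Hessian bound on $K$, Jensen to turn $\E[\1_{\{T>s\}}G_s^{2\theta}]$ into $h(s)^{2\theta}$, and comparison with the ODE $\dot y=-\rho\CL^2 y^{2\theta}+C'(t+1)^{-2\sigma}$ — is exactly the spirit of the paper's Proposition~\ref{prop:loja1} and Remark~\ref{rem:loja1}. But the inequality you land on cannot produce the decay, and you correctly flagged this as ``the remaining and principal step''; the gap there is genuine and the dyadic bootstrap as sketched will not close it.

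The problem is that you bound $h(t)$ through $v(t):=\E[G_{t\wedge T}]$. Writing $Z_t:=\1_{\{T>t\}}G_t$ so that $h(t)=\E[Z_t]$, one has $v(t)=h(t)+\E[\1_{\{T\le t\}}G_T]$, and the second, ``stopped-mass'' term is nonnegative, nondecreasing, and does \emph{not} decay (on $K$ one only knows $0\le G_T\le \sup_K(F-\ell)$). Your integral inequality $h(t)+\rho\CL^2\int_0^t h^{2\theta}\le G_0+C'\int_0^t(s+1)^{-2\sigma}$ therefore only gives boundedness of $h$ and $\int_0^\infty h^{2\theta}<\infty$ (for $\sigma>1/2$). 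That alone is compatible with $h$ having tall thin spikes at arbitrarily late times, so no pointwise rate follows; and crucially the right-hand side has the fixed constant $G_0$ rather than $h(t_0)$, so you cannot ``restart'' the inequality on a dyadic block and iterate — restarting would put $v(t_0)\ge h(t_0)$ on the right, and $v(t_0)$ does not decay.

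The fix (and what the paper does) is to work with $h(t)=\E[Z_t]$ itself. Decomposing $\rd Z_t=\1_{\{T>t\}}(\langle f(X_t),\alpha_t\rangle+\tfrac12\tr(\beta_t^\dagger H(X_t)\beta_t))\,\rd t+\rd M_t-\rd\xi_t$ with $\xi$ the \emph{increasing} process recording the drop $G_T$ at the stopping time, the term $-\E[\xi_t-\xi_{t_0}]\le 0$ enters with the favourable sign. Taking expectations between $t_0$ and $t$ and then applying the \L ojasiewicz/angle/diffusivity bounds and Jensen yields
$$
h(t)\;\le\; h(t_0)\;-\;\rho\CL^2\int_{t_0}^t h(s)^{2\theta}\,\rd s\;+\;C'\int_{t_0}^t (s+1)^{-2\sigma}\,\rd s \qquad\text{for all }0\le t_0\le t,
$$
which is the restartable, differential-type inequality one actually needs. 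The paper then verifies that $h$ (there $\zeta$) is right-continuous with no upward jumps, and runs a supersolution comparison against $\Phi^{(R)}_t+\alpha v_t$ where $\Phi^{(R)}$ solves $\dot\Phi=-\eta\Phi^{2\theta}$ and $v_t=(t+1)^{-(\sigma/\theta\wedge 1/(2\theta-1))}$ — the two regimes of your $\wedge$ come out exactly as the two admissible choices of $v_t$ in the hypothesis~(\ref{eq:4}). So the missing ideas are: (i) discard, not absorb, the stopped mass when passing to expectations, and (ii) replace the one-shot integral inequality by this restartable version (equivalently, the right-secant differential inequality), which is what any comparison/bootstrap argument genuinely requires.
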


	Theorem~\ref{the:rate} will be a consequence of Proposition~\ref{prop:loja1} and Remark~\ref{rem:loja1} below.  Note that if $\sigma\ge\theta/(2\theta-1)$ we essentially get the same order of convergence as in the case without random perturbations. If the drift terms vanishes at a slower rate, i.e. $\sigma<\theta/(2\theta-1)$, this effects the order of decay of the expected function value $(F(X_t)-\ell)_{t \ge 0}$. 

	\begin{nota}
		For a vector $v \in \R^d$ we denote 
		by $|v|$ the Euclidean norm induced by the scalar product, i.e. $|v|^2 = \langle v,v\rangle$. For a matrix $A\in \R^{d\times d'}$ we denote by $|A|$ the spectral norm
		and by $|A|_F$ the Frobenius norm, i.e.
		$$
		|A|:=\max\limits_{|x|\neq 0}\frac{|Ax|}{|x|}
		\quad \text{ and } \quad  |A|_F:=\sqrt{\mathrm{tr}(A^\dagger A)}.
		$$
		For $t_0 \ge 0$ and a real-valued continuous semimartingale $(Y_s)_{s\ge t_0}$ we denote by $\langle Y\rangle=(\langle Y\rangle_s)_{s\ge t_0}$ its quadratic variation process.
	\end{nota}

	\section{Proof of Theorem~\ref{theo1}}
	 In this section we prove a slight generalisation of Theorem~\ref{theo1}. 
		
	\begin{prop} \label{prop:conv1}
		Consider
				\begin{align*}
		\IC_1:=\Bigl\{&\liminf_{t\to\infty} F(X_t)>-\infty,\ \limsup_{t\to\infty} |\beta_t|<\infty, \  \liminf_{t\to\infty} \frac {\langle f(X_t),-\alpha_t\rangle}{|f(X_t)|^2} >0 \, \text{ and }\\
		& \Bigl(\int_0^{t} \tr(\beta_s^\dagger H(X_s) \beta_s)\,\rd s \Bigr)_{t \ge 0} \text{ converges to a finite value} \Bigr\}
		\end{align*}
		$$
		\IC_2:=\IC_1\cap \Bigl\{\limsup_{t\to\infty} |\alpha_t|<\infty,\, \limsup_{\eps\to0} \limsup_{\substack{t\to\infty \\ y\in B(X_t,\eps)}}  |H(y)| <\infty, \int_0^\infty |\beta_s|_F^2<\infty\Bigr\},
		$$
		and
		$$
		\IC_3:=\IC_2\cap\Bigl\{\liminf_{t\to\infty} |X_t|<\infty\Bigr\},
		$$
		where we interpret $\frac 00$ as $\infty$. On $\IC_1$, almost surely, $(F(X_t))_{t \ge 0}$ converges  to a finite value and $\limsup_{t_0 \to \infty } \int_{t_0}^\infty \langle f(X_s),-\alpha_s\rangle\,\rd s<\infty$.
		On $\IC_2$, almost surely, $\lim_{t\to\infty} f(X_t)=0$.  If the set of critical points of $F$ is at most countable,
		then,  on $\IC_3$, almost surely, $(X_t)_{t \ge 0}$ converges.\footnote{Note that $\IC_3$ entails $\IC_1$ and $\IC_2$.}
	\end{prop}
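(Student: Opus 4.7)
The plan is to apply Itô's formula to $F(X_t)$ and exploit the local-martingale convergence theorem that a continuous local martingale bounded below converges a.s.\ to a finite limit. Writing $M_t:=\int_0^t \langle f(X_s),\beta_s\,\rd W_s\rangle$ for the continuous local-martingale part, Itô's formula yields
\[
F(X_t)+\int_0^t \langle f(X_s),-\alpha_s\rangle\,\rd s = F(X_0)+M_t+\tfrac12\int_0^t \tr\bigl(\beta_s^\dagger H(X_s)\beta_s\bigr)\,\rd s. \qquad(\star)
\]

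For the first claim, I would exhaust $\IC_1$ by events $\IC_1^{(n)}$ on which, for all $t\ge n$, one has $F(X_t)\ge -n$, $\langle f(X_t),-\alpha_t\rangle\ge n^{-1}|f(X_t)|^2$, $|\beta_t|\le n$, and $\int_n^t \tr(\beta_s^\dagger H(X_s)\beta_s)\,\rd s$ stays in $[-n,n]$. Let $\tau_n$ be the first time $t\ge n$ at which any of these bounds fails, so that $\tau_n=\infty$ on $\IC_1^{(n)}$ and $\IC_1=\bigcup_n\IC_1^{(n)}$ up to null sets. Rearranging $(\star)$ and stopping at $\tau_n$,
\[
M_{t\wedge\tau_n}-M_n = \bigl(F(X_{t\wedge\tau_n})-F(X_n)\bigr)+\int_n^{t\wedge\tau_n}\langle f(X_s),-\alpha_s\rangle\,\rd s-\tfrac12\int_n^{t\wedge\tau_n}\tr\bigl(\beta_s^\dagger H(X_s)\beta_s\bigr)\,\rd s,
\]
and each summand on the right is bounded below by an $\cF_n$-measurable a.s.\ finite random variable. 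Hence $M^{\tau_n}$ is bounded below, so by the local-martingale convergence theorem it converges a.s.; via $(\star)$ this forces the monotone non-decreasing integral $\int_0^t\langle f(X_s),-\alpha_s\rangle\,\rd s$ (monotone eventually on $\IC_1^{(n)}$) and hence also $F(X_t)$ (being bounded below with convergent sum with the drift) to converge individually on $\IC_1^{(n)}$. Taking the union over $n$ gives the first claim.

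For the second claim, on $\IC_2$ the lower bound $\langle f(X_t),-\alpha_t\rangle\ge\rho|f(X_t)|^2$ (for some random $\rho>0$, eventually) together with the first claim yields $\int_0^\infty|f(X_s)|^2\,\rd s<\infty$ almost surely. I would then invoke the elementary principle that a square-integrable, asymptotically uniformly continuous function tends to zero. Uniform continuity of $t\mapsto f(X_t)$ on some tail $[T,\infty)$ follows from a local-Lipschitz estimate $|f(X_t)-f(X_s)|\le C|X_t-X_s|$ with $C<\infty$ obtained from the Hessian condition in $\IC_2$, together with
\[
|X_t-X_s|\le \bigl(\sup_u|\alpha_u|\bigr)|t-s|+\Bigl|\int_s^t\beta_u\,\rd W_u\Bigr|,
\]
whose first summand is $O(|t-s|)$ by $\limsup|\alpha_t|<\infty$ and whose second summand vanishes as $s,t\to\infty$ since $\int_0^\infty|\beta_u|_F^2\,\rd u<\infty$ forces the underlying martingale to converge. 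The standard contradiction argument (if $|f(X_{t_n})|\ge\eps$ along a sequence $t_n\to\infty$, then $|f(X_s)|^2\ge\eps^2/4$ on disjoint intervals of positive total length, violating $L^2$-integrability) completes the step, again after the same style of localisation used above.

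For the third claim, on $\IC_3$ write $a:=\liminf_{t\to\infty}|X_t|<\infty$ and $b:=\limsup_{t\to\infty}|X_t|\in[a,\infty]$, and note that by continuity of $f$ and $F$ every accumulation point $z$ of $(X_t)$ along a bounded sequence satisfies $f(z)=0$ and $F(z)=\ell$, where $\ell$ is the limit of $F(X_t)$ from the first claim; hence all such accumulation points lie in the countable set $\cK:=\cC\cap F^{-1}(\ell)$. If $b>a$, then for every $R\in(a,b)$ the continuous path $X$ crosses $\partial B(0,R)$ at arbitrarily large times, producing by compactness of $\partial B(0,R)$ an element of $\cK$ of norm exactly $R$; as $R$ ranges over the uncountable interval $(a,b)$ this contradicts countability of $\cK$. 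Therefore $|X_t|\to a<\infty$, so $(X_t)$ is eventually contained in a compact set; its $\omega$-limit set is then nonempty, compact, connected and contained in the countable $\cK$, hence a singleton. Thus $X_t$ converges. The main technical hurdle is the clean localisation used in the first two steps; once identity $(\star)$ is read off on each $\IC_1^{(n)}$, the remaining martingale and path-regularity arguments are largely mechanical, and the topological argument for the third claim is short.
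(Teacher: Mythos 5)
Your proposal is essentially correct and, for the first two claims, follows the paper's strategy: apply It\^o to $F(X_t)$, exhaust $\IC_1$ (resp.\ $\IC_2$) by events cut off by stopping times, and reduce everything to martingale convergence and a local--Lipschitz contradiction argument for $f(X_t)\to 0$. Two small technical points in the first step are worth flagging. First, the convergence of $M^{\tau_n}$ from a lower bound is not quite immediate, because the lower bound $-n-F(X_n)-\tfrac n2$ is only an $\cF_n$-measurable random variable; one still needs an extra localisation (e.g.\ intersect with $\{F(X_n)\ge -m\}$ and let $m\to\infty$, or argue conditionally on $\cF_n$) before the supermartingale convergence theorem applies. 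The paper sidesteps this by noting that $M_t-\tfrac{C_2}{C_1}\langle M\rangle_t$ is bounded below, deducing $\langle M\rangle_\infty<\infty$ a.s.\ via the Brownian time change, and concluding $M_t$ converges; either route works. Second, $\IC_1$ is \emph{not} equal to $\bigcup_n \IC_1^{(n)}$ as you write: on $\IC_1^{(n)}$ the trace integral is merely bounded, not convergent, so $(\star)$ does not yet force $F(X_t)$ to converge there. You need to keep the event $\{(\int_0^t\tr(\beta_s^\dagger H(X_s)\beta_s)\,\rd s)_{t}$ converges$\}$ as a separate intersecting factor (as the paper does); since $\IC_1$ is contained in it, this is a harmless but necessary fix.

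For the third claim your route is genuinely different from the paper's and is, if anything, more careful. The paper sets $r:=\limsup_{t}|X_t(\omega)|<\infty$ without justification, even though $\IC_3$ only assumes $\liminf_t|X_t|<\infty$; your sphere-crossing argument (each sphere of radius $R\in(\liminf,\limsup)$ is crossed infinitely often, hence carries a critical point of norm exactly $R$, contradicting countability of $\cC$) closes exactly this gap and forces $\limsup|X_t|<\infty$. After that, you invoke that the $\omega$-limit set of a bounded continuous path is nonempty, compact and connected, hence is a singleton once contained in a countable set; the paper instead extracts a coordinate interval $[a,b]$ and uses nested intervals to build an injection from $[a,b]$ into $\cC$. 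Both are concrete ways to say "a connected set of more than one point is uncountable", and the conclusions match; your version reads more cleanly and covers the $\limsup=\infty$ case that the paper glosses over.
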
 
 
		Note that $\IC \cap \IB \subset \IC_2$ since $\int_0^\infty |\beta_s|_F^2 \, \rd s<\infty$ and $\limsup_{t \to \infty} |H(X_t)|<\infty$ imply convergence of
		$$
		\Bigl(\int_0^{t} \tr(\beta_s^\dagger H(X_s) \beta_s)\,\rd s\Bigr)_{t \ge 0}.
		$$
		Therefore, Theorem~\ref{theo1} directly follows from Proposition~\ref{prop:conv1}.
%
	
	\begin{proof}[Proof of Proposition~\ref{prop:conv1}]
		Let $t_0, C_1, C_2 \ge 0$ and set
		\begin{align*}
		T:=T^{(t_0,C_1,C_2)}:=\inf \Bigl\{ t\ge t_0: \ &  F(X_t)<-C_1  \, \vee \, \int_{t_0}^t \tr(\beta_t^\dagger H(X_t) \beta_t)\,\rd s>C_1 \, \vee \, |\beta_t|>C_1,\\
		&
		\text{ \ or \ } \langle f(X_t),\alpha_t\rangle >-C_2 |f(X_t)|^2\Bigr\}
		\end{align*}
		

		By Itô's formula one has 
		\begin{align}
		\label{eq:1}
		\rd F(X_t)&= \langle f(X_t) ,\alpha_t\rangle \,\rd t+ \langle f(X_t)),\beta\,\rd W_t\rangle +\frac 12 \tr(\beta_t^\dagger H(X_t) \beta_t)\,\rd t.
		\end{align}
		We let
		$$
		(M_t)_{t \ge t_0}:=\Bigl( \int_{t_0}^{t\wedge T}  \langle f(X_t),\beta\,\rd W_t\rangle \Bigr)_{t\ge t_0}
		$$
		and note that its variation process satisfies
		$$\langle M\rangle_t =\int_{t_0}^{t\wedge T} |\beta^\dagger_s f(X_s)|^2 \,\rd s \le C_1\int_{t_0}^{t\wedge T} |f(X_s)|^2\,\rd s.
		$$
		On the other hand,
		$$
		\int_{t_0}^ {t\wedge T} \langle f(X_s) ,\alpha_s\rangle \,\rd s \le - C_2 \int_{t_0}^ {t\wedge T} | f(X_s)|^2 \,\rd s.
		$$
		Consequently,
		$$
		\int_{t_0}^ {t\wedge T} \langle f(X_s) ,\alpha_s\rangle \,\rd s+M_t\le M_t- \frac {C_2}{C_1} \langle M\rangle_t
		$$
		We conclude that for $t\ge t_0$
		\begin{align*}
		\underbrace{F(X_{t\wedge T})}_{\ge -C_1} -F(X_{t_0}) &= \underbrace{
			\int_{t_0}^ {t\wedge T} \langle f(X_s) ,\alpha_s\rangle \,\rd s+M_t}_{\le M_t- \frac {C_2}{C_1} \langle M\rangle_t} +  \frac 12\underbrace{ \int_{t_0}^{t\wedge T} \tr(\beta_{s}^\dagger H(X_{s}) \beta_{s}) \,\rd s}_{\le C_1}.
		\end{align*}
		Note that on $\{\langle M\rangle _\infty=\infty\}$, almost surely, $M_t-\frac {C_2}{C_1}\langle M\rangle_t\to -\infty$. Since the left-hand side is  bounded from below we get that $\langle M\rangle _\infty<\infty$, a.s., so that the martingale $(M_t)_{t \ge t_0}$ converges almost surely to a finite value. Consequently,
		$$
		-\int_{t_0}^ {t\wedge T} \langle f(X_s) ,\alpha_s\rangle \,\rd s\le  F(X_{t_0})+M_t+ \frac 32 C_1
		$$
		and the left-hand side is increasing in $t$ with a finite limit.   Consequently, on $$
			\Big\{ \Big(\int_0^t \tr(\beta_s^\dagger H(X_s)\beta_s) \, \rd s\Big)_{t \ge 0} \text{ converges}\Big\},
			$$
			we have that $(F(X_{t\wedge T}))_{t\ge 0}$ converges and $-\int_{t_0}^ {T} \langle f(X_s) ,\alpha_s\rangle \,\rd s<\infty$, almost surely. We note that 
			$$
			\IC_1=\Big\{ \Big(\int_0^t \tr(\beta_s^\dagger H(X_s)\beta_s) \, \rd s\Big)_{t \ge 0} \text{ converges}\Big\} \cap \bigcup_{n\in\N} \{T^{(n,n,1/n)}=\infty\}
			$$
			which implies the first statement.
		%

		To show the  second statement we set
		$$
		\tilde T:=\tilde T^{(t_0,C_1,C_2)}:= T^{(t_0,C_1,C_2)}\wedge \inf\Bigl\{ t\ge t_0: |\alpha_t|\vee  \sup_{y\in B(X_t,C_2)}|H(y)|\vee \int_{t_0}^t |\beta_s|^2_F\,\rd s >C_1\Bigr\}
		$$  
		and note that it satisfies to show that $f(X_t) \to 0$, almost surely, on $\{\tilde T^{(t_0,C_1,C_2)}=\infty\}$, since
		$$
			\IC_2= \bigcup_{n\in\N} \{\tilde T^{(n,n,1/n)}=\infty\}.
		$$
		Consider
		$$
		(\tilde M_t)_{t \ge t_0}:= \Bigl( \int_{t_0}^{t\wedge \tilde T} \beta_s \, \rd W_s \Bigr)_{t\ge t_0} \text{ \ and \ }(B_t)_{t\ge t_0}:=\Bigl( \int_{t_0}^{t\wedge \tilde T} \langle f(X_s), \alpha_s\rangle \,\rd s \Bigr)_{t\ge t_0}
		$$
		Note that $(\tilde M_t)_{t \ge t_0}$ is a multivariate  martingale with
		$$
		\langle \tilde M \rangle_t =  \int_0^{t\wedge \tilde T} |\beta_s|_{F}^2 \, \rd s \le C_1.
		$$
		Hence, $(\tilde M_t)_{t \ge t_0}$ converges almost surely and together with the proof of the first statement we get that
		$$
		\Omega_0 := \bigl\{(\tilde M_t)_{t \ge t_0} \text{ and } (B_t)_{t \ge t_0} \text{ converge}\bigr\}
		$$
		is an almost sure event. Suppose now that there exists $\omega \in \Omega_0 \cap  \{ \tilde T=\infty\}$ for which $(f(X_t(\omega)))_{t \ge 0}$ does not converge to zero. Then there exist $\delta>0$ and an increasing $[t_0,\infty)$-valued sequence $(t_k)_{k \in \N}$ converging to infinity with 
		$$
		|f(X_{t_k}(\omega))|>\delta
		$$
		for all $k \in \N$. 
		Pick $\eps\in(0,C_2)$ with $C_1 \eps\le \delta/2$.
		By thinning the original sequence $(t_k)_{k \in \N}$ we can ensure that $([t_k,t_k+\eps/(2C_1 )]:k \in \N)$ are disjoint intervals and that   for all $k \in \N$
		$$
		\sup\limits_{t\ge t_k}|\tilde M_t(\omega)-\tilde M_{t_k}(\omega)|<\eps/2.
		$$
		For $k\in\N$ and  $t\in[t_k,t_k+\eps/(2C_1 )]$ one has
		\begin{align*}
		|X_t(\omega)-X_{t_k}(\omega)|&\le \int_{t_k}^t |\alpha_s(\omega)| \, \rd s + |\tilde M_t(\omega)-\tilde M_{t_k}(\omega)| \\
		& \le C_1  (t-t_k) + \eps/2 \le \eps
		\end{align*}
		Moreover, since $\eps<C_2$ we conclude for $x=X_t(\omega)$ and $y=X_{t_k}(\omega)$ that
		{
			\begin{align*}
			|f(y)-f(x)| \le |H(\xi)|\, |y-x|\le C_1 |y-x|,
			\end{align*}
		}
		where $\xi$ lies on the segment joining $x$ and $y$.
		Hence,
		$$
		|f(X_t(\omega))|\ge |f(X_{t_k}(\omega))|- \underbrace{|f(X_t)-f(X_{t_k}(\omega))|}_{\le \delta/2} \ge \delta/2,
		$$
		so that
		$$
		\int_{t_k}^{t_k+\eps/(2C_1)} |f(X_t(\omega))|^2 \, \rd t \ge   \Bigl( \frac \delta 2 \Bigr)^2 \frac{\eps}{2 C_1} .
		$$
		This entails that $\int_{t_0}^\infty |f(X_t)|^2\,\rd t=\infty$ and, hence,
		$$
		-B_t(\omega)=\int_{t_0}^{t} \langle f(X_s(\omega)),-\alpha_s(\omega)\rangle\,\rd s\ge C_2 \int_{t_0}^{t} |f(X_s(\omega)|^2\,\rd s \to \infty.
		$$
		This contradicts that $(B_t(\omega))_{t\ge t_0}$ converges to a finite value, by choice of $\omega$.

		It remains to prove almost sure convergence of $(X_t)_{t \ge 0}$ on $\IC_3$ in the case where the set $\cC:=\{x \in \R^d:f(x)=0\}$ is at most countable.
		Take $\omega \in \Omega$ for which  $\lim_{t\to\infty} f(X_t(\omega)) =0$ and set $r:=\limsup_{t \to \infty} |X_t(\omega)|<\infty$. We show by contradiction that $(X_t(\omega))_{t \ge 0}$ converges. Suppose that $(X_t(\omega))_{t \ge 0}$ would not converge. Then there would exist a coordinate $i \in\{1,\dots,d\}$ and reals $a<b$ such that the set of accumulation points 
		$$
		\cK:=\bigcap_{n\in\N} \bigl\{X^{(i)}_t(\omega): t\ge n,  X_t(\omega)\in \overline{B(0,r+1)}\bigr\}
		$$
		is a compact set containing $[a,b]$. By applying the principle of nested intervals we can produce for every $u\in [a,b]$ an accumulation point $\eta(u)$ of $(X_t(\omega))_{t \ge 0}$ with $i$th coordinate equal to $u$. Recall that $f(X_t)\to0$ so that $\eta(u)$ is a critical point of $F$. We thus observe that there is an injective mapping taking $[a,b]$ to the set of critical points of $F$ which contradicts the assumption.
	\end{proof}
	\black
	\section{Proof of Theorem~\ref{theo2}}\label{sec3}
	The proof of Theorem~\ref{theo2} is arranged as follows. In Section~\ref{sec31}, we provide moment estimates for the Lyapunov function seen by the SDE in a neighbourhood $U$ of a point $y \in \R^d$ so that the \CL ojasiewicz-inequality is satisfied on $U$ with parameters $\CL>0$ and $\theta \in (\frac 12, 1)$. To that purpose we fix a particular critical level and ``take out'' realisations that undershoot the critical level by a certain amount (\emph{lower dropout}).  In the case where no perturbation is present ($\beta_t \equiv 0$), we recover the convergence rate for the respective ODE (\ref{eq:ODE}) derived in \cite{lojasiewicz1984sur}, i.e.
	$$
	|F(x_t)-F(y)| \le C  (t+1)^{-\frac{1}{2\theta-1}}.
	$$
	We show that the convergence rate is maintained in the SDE setting as long as $|\beta_t|\le \tilde C  (t+1)^{-\frac{\theta}{2\theta-1}}$ for sufficiently large $t$. In Section~\ref{sec32}, we provide estimates for   the drift term $(\int_0^t \alpha_s \, \rd s)_{t \ge 0}$ in the setting of Section~\ref{sec31} on the event that no dropout occurs.
	In Section~\ref{sec33}, we provide estimates that later allow us to show that when a dropout occurs for a particular critical level it is very likely that the limit of the Lyapunov function lies strictly below that critical level. Finally, in Section~\ref{sec36} we combine the results of the previous subsections to achieve the proof of Theorem~\ref{theo2}.
	
	\subsection{Convergence rate for the target value in the case without lower dropout} \label{sec31}
	
	\begin{prop}\label{prop:loja1}Suppose that the following assumptions are satisfied:\smallskip
		
		\noindent{\bf Assumptions on the error term.}
		Let  $\theta\in(\frac 12,1)$, $(v_t)_{t \ge 0}$ be a continuously differentiable, decaying, positive function {and $(\sigma_t)_{t \ge 0}$ be a right-continuous and locally integrable function}, so that there exist constants $\kappa_1, \kappa_2 >0$ such that for  all $t \ge 0$
		\begin{align} \label{eq:4}
		-\frac{\dot v_t}{v_t} \le \kappa_1 v_t^{2\theta-1} \quad \text{ and } \quad \kappa_2 v_t^{2\theta} \ge  \sigma^2_t.
		\end{align}
		\noindent{\bf {No lower dropout/\loja-inequality.}}
		Let $\ell \in \R$, $\CL>0$ and $(w_t)_{t \ge 0}=(C_w v_t)_{t \ge 0}$ for a $C_w\ge 0$. Let $T$ be a stopping time, such that for all $t \ge 0$, on $\{T>t\}$,
		\begin{align*}
		|f(X_t)|\ge \CL |F(X_t)-\ell|^\theta\  , \ F(X_t)-\ell\ge -w_t 
		\end{align*}
		as well as
		\begin{align*}
		-\langle f(X_t),\alpha_t\rangle \ge \rho |f(X_t)|^2 , \ \ 
		\sfrac12 \tr(\beta_t^\dagger H(X_t) \beta_t)\le \sigma^2_t
		\end{align*}
		for a $\rho>0$.  
		Additionally, suppose that for every $t>0$
		\begin{align}\label{eq7246}
		\E\Bigl[\int_0^{t\wedge T} |\beta_s^\dagger f(X_s)|^2\, \rd s\Bigr]<\infty.
		\end{align}

		\noindent	{\bf Result.} Suppose that the above assumptions hold and
		$$
		\E[\1_{\{T>0\}}  (F(X_0)-\ell)]\le R.
		$$
		Then there exist $\alpha,\eta >0$ such that, for all $t\ge 0$, one has
		\begin{align} \label{eq:4531}
			\E\bigl[\1_{\{T>t\}} (F(X_t)-\ell+w_t) \bigr]\le \Phi^{(R)}_{t}  +\alpha v_t,
		\end{align} 
		where 
		$$
		\Phi^{(R)}_t=R\Bigl(\big((2\theta-1)\eta R^{2\theta-1}\big)t+1\Bigr)^{-1/(2\theta-1)}.
		$$
%
	\end{prop}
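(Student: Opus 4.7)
The plan is to apply It\^o's formula to $F(X_{t \wedge T}) - \ell + w_{t \wedge T}$, take expectations (the stochastic integral is a true martingale by assumption (\ref{eq7246})), and combine the drift estimate, the \L ojasiewicz inequality and Jensen's inequality to obtain a scalar differential inequality for
$\Psi_t := \E[\1_{\{T>t\}}(F(X_t)-\ell+w_t)]$.
A comparison against the ODE defining $\Phi^{(R)}$, augmented by the additive buffer $\alpha v_t$, will then yield (\ref{eq:4531}).

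\textbf{Setting up the differential inequality.} Put $A_t := \E[F(X_{t\wedge T}) - \ell + w_{t\wedge T}]$ and write $A_t = \Psi_t + B_t$ with $B_t := \E[\1_{\{T \le t\}}(F(X_T)-\ell+w_T)]$. By the no-lower-dropout hypothesis and path continuity, $F(X_T) - \ell + w_T \ge 0$ on $\{T < \infty\}$, so $B_t$ is nondecreasing and $\Psi_t - \Psi_s \le A_t - A_s$ for $s \le t$. It\^o's formula together with the drift/diffusion bounds and $\dot w_s \le 0$ yields
\[
A_t - A_0 \ \le\ \int_0^t \E\bigl[\1_{\{s<T\}}\bigl(-\rho\,|f(X_s)|^2 + \sigma_s^2\bigr)\bigr]\, \rd s.
\]
On $\{s < T\}$, setting $Y_s := F(X_s) - \ell + w_s \ge 0$, a short case analysis depending on whether $Y_s \ge 2 w_s$ or $Y_s < 2 w_s$ combined with the \L ojasiewicz inequality produces the pointwise bound $|f(X_s)|^2 \ge \CL^2 2^{-2\theta} Y_s^{2\theta} - \CL^2 w_s^{2\theta}$. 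Inserting this, using $\sigma_s^2 \le \kappa_2 v_s^{2\theta}$ and $w_s = C_w v_s$, and applying Jensen's inequality to the random variable $\1_{\{s<T\}}Y_s$ with the convex map $x \mapsto x^{2\theta}$ (possible since $2\theta > 1$), I obtain
\[
\Psi_t - \Psi_s \ \le\ \int_s^t \bigl[-c_1\,\Psi_u^{2\theta} + c_2\,v_u^{2\theta}\bigr]\,\rd u, \qquad 0 \le s \le t,
\]
with $c_1 := \rho \CL^2 2^{-2\theta}$, $c_2 := \rho\CL^2 C_w^{2\theta} + \kappa_2$ and initial value $\Psi_0 \le R + C_w v_0$.

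\textbf{Comparison.} Set $\eta := c_1$ in the definition of $\Phi^{(R)}$ and $u(t) := \Phi^{(R)}_t + \alpha v_t$. Since $A$ is absolutely continuous and $B$ is nondecreasing, $\Psi$ is right-continuous with only downward jumps, so any first crossing of the continuous envelope $u$ must satisfy $\Psi_{t^\star} = u(t^\star)$. At such a point the integrated inequality forces $D^+ \Psi(t^\star) \le -c_1 \Psi_{t^\star}^{2\theta} + c_2 v_{t^\star}^{2\theta}$, whereas the crossing forces $D^+ \Psi(t^\star) \ge \dot u(t^\star) = -c_1(\Phi^{(R)}_{t^\star})^{2\theta} + \alpha \dot v_{t^\star}$. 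Using the superadditivity $(a+b)^{2\theta} \ge a^{2\theta} + b^{2\theta}$ (valid since $2\theta > 1$ and $a,b \ge 0$) to bound $\Psi_{t^\star}^{2\theta} \ge (\Phi^{(R)}_{t^\star})^{2\theta} + \alpha^{2\theta} v_{t^\star}^{2\theta}$, and using $-\dot v_{t^\star} \le \kappa_1 v_{t^\star}^{2\theta}$, the two Dini bounds reduce to the scalar inequality $c_1 \alpha^{2\theta} \le c_2 + \alpha\kappa_1$. This is violated for all sufficiently large $\alpha$ (since $2\theta > 1$); choosing in addition $\alpha \ge C_w$ also guarantees $\Psi_0 \le u(0)$, so no such crossing exists and (\ref{eq:4531}) holds.

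\textbf{Main obstacle.} The positive source term $c_2 v_s^{2\theta}$ in the differential inequality cannot be absorbed into $\dot v_s$, because the hypothesis $-\dot v_s \le \kappa_1 v_s^{2\theta}$ only caps the decay rate of $v_s$ from above without providing any matching lower bound; this is what forces the extra additive buffer $\alpha v_t$ in the conclusion. The algebraic lever that closes the argument is the superadditivity $(a+b)^{2\theta} \ge a^{2\theta} + b^{2\theta}$, which manufactures the spare term $c_1 \alpha^{2\theta} v_{t^\star}^{2\theta}$ needed to dominate $c_2 v_{t^\star}^{2\theta} + \alpha\kappa_1 v_{t^\star}^{2\theta}$ once $\alpha$ is chosen large.
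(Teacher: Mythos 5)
Your proof is correct and follows essentially the same route as the paper's: apply It\^o's formula, package the quantity of interest into a right-continuous nonnegative function of $t$ with only downward jumps, combine the \loja-inequality and Jensen's inequality into a scalar differential inequality, and run a Dini-derivative comparison against $\Phi^{(R)}_t + \alpha v_t$, ruling out a first crossing by a contradiction at the crossing time. The only differences are cosmetic: (i) you obtain the pointwise bound $|f(X_s)|^2 \ge \CL^2 2^{-2\theta}Y_s^{2\theta} - \CL^2 w_s^{2\theta}$ via a two-case analysis, whereas the paper uses the convexity inequality $(a+b)^{2\theta}\le 2^{2\theta-1}(a^{2\theta}+b^{2\theta})$ and thereby carries the marginally sharper constant $2^{-(2\theta-1)}$ — both work; (ii) you close the algebra with the explicit choice $\eta=c_1$ and superadditivity $(a+b)^{2\theta}\ge a^{2\theta}+b^{2\theta}$, reducing the final step to the single scalar condition $c_1\alpha^{2\theta}>c_2+\alpha\kappa_1$, whereas the paper posits the abstract two-parameter inequality $\rho\CL^22^{-(2\theta-1)}(a+b)^{2\theta}>\eta a^{2\theta}+\frac{\rho\CL^2C_w^{2\theta}+\kappa_2+\kappa_1\alpha}{\alpha^{2\theta}}b^{2\theta}$ and verifies it afterwards; your choice is a concrete instance of theirs and is if anything slightly cleaner. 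One point you state but do not prove, which the paper spells out, is the right-continuity of $\Psi_t=\E[Z_t]$ and the absence of upward jumps; this requires the finiteness of $\E[\int_0^{t\wedge T}|\langle f(X_s),\alpha_s\rangle|\,\rd s]$, which follows from nonnegativity of $Z$ exactly as in the paper, so this is a minor omission rather than a gap.
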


	\begin{proof}
		We assume without loss of generality that $\ell=0$. By Itô's formula
		\begin{align} \label{eq:itoF}
		\rd F(X_t)= \langle f(X_t) ,\alpha_t\rangle \,\rd t+   \langle f(X_t),\beta_t\,\rd W_t\rangle +\frac 12 \tr(\beta_t^\dagger H(X_t) \beta_t)\,\rd t.
		\end{align}
		Let $(Z_t)_{t\ge 0}:= (\1_{\{T>t\}} (F(X_t)+w_t))_{t \ge 0}$. By choice of $T$, $(Z_t)_{t \ge 0}$ is a non-negative process
		and we have
		\begin{align} \label{eq:7}
		\rd Z_t =  \1_{\{T > t\}} \bigl(\langle f(X_t) ,\alpha_t\rangle  +\frac 12 \tr(\beta_t^\dagger H(X_t) \beta_t)+\dot w_t \bigr)\,\rd t+\rd M_t- \rd \xi_t,
		\end{align}
		where $(M_t)_{t \ge 0}$ denotes the $L^2$-martingale 	 
		$$
		(M_t)_{t \ge 0}:=\Bigl( \int_0^{T\wedge t}  \langle f(X_s)),\beta_s\,\rd W_s\rangle \Bigr)_{t\ge 0},
		$$
		see assumption~(\ref{eq7246}), and  $(\xi_t)_{t \ge 0}$ is an increasing process given by
		$$
		\xi_t:=\begin{cases}
		0, & \text{ if }t<T\text{ or }T=0,\\
		F(X_T)+w_T, & \text{ else}.
		\end{cases} 
		$$
		For all $t\ge 0$, we let $\zeta_t:=\E[Z_t]$. In the first step we show that $(\zeta_t)_{t \ge 0}$ is right-continuous and has no upward jumps. By definition, $(Z_t)_{t \ge 0}$ is right-continuous and it is straight-forward to verify the continuity theorem for integration (Note that $Z_t$ is non-negative and $\langle f(X_t),\alpha_t\rangle \le 0$ on $\{t<T\}$). To verify that $(\zeta_t)_{t \ge 0}$ has no upward jump at $t>0$ we note that for $\eps\in(0,t)$
		$$
		\zeta_t-\zeta_{t-\eps}= \E[Z_t-Z_{t-\eps}]\le \E\Bigl[\int_{t-\eps}^t \1_{\{s<T\}} \frac 12 \tr(\beta_s^\dagger H(X_s) \beta_s)\,\rd s\Bigr]\to0
		$$
		as $\eps\downarrow 0$.
		
		Take $\alpha\ge C_w$ and $\eta>0$ such that for all $a,b>0$
		\begin{align} \label{eq:ineq:1}
			\rho\L^22^{-(2\theta-1)} (a+b)^{2\theta} > \eta \,a^{2\theta}+\frac{\rho\L^2 C_w^{2\theta}+\kappa_2+\kappa_1 \alpha}{\alpha^{2\theta}} \,b^{2\theta}.
		\end{align}
		We will now show that $\zeta_t\le \bar\zeta_t:=\Phi^{(R)}_t +\alpha v_t$ for all $t\ge 0$. Note that $\zeta_0\le R+C_w v_0\le \Phi_0^{(R)}+\alpha v_0=\bar \zeta_0$ since $\alpha\ge C_w$ so that the inequality is at least true for $t=0$. We prove the statement by contradiction.
		Suppose the time
		$$
		\tau:=\inf\{s\ge 0: \zeta_s>\bar \zeta_s\}
		$$
		is finite. Since $\zeta-\bar \zeta$ has no upwards jumps and starts in a non-positive value we have $\zeta_\tau=\bar\zeta_\tau$.
		We consider the slope of the right secant of $\zeta$ with supporting points $\tau$ and $\tau+\eps$ for $\eps >0$. 
		Using the \loja-inequality we get that  on $\{T >t\}$
		\begin{align*}
		\langle f(X_t),\alpha_t \rangle &\le -\rho |f(X_t)|^2 \le - \rho \CL^2 |F(X_t)|^{2\theta} \\
		& \le -  \rho\CL^2 \bigl(2^{-(2\theta-1)}|F(X_t)+w_t|^{2\theta}-w_t^{2\theta} \bigr)
		\end{align*}
		so that with~(\ref{eq:7}) and $\frac 12\tr(\beta^\dagger_t H(X_t)\beta_t)\le \sigma^2(t)$
		$$
		\frac{	\zeta_{\tau+\eps}- \zeta_\tau}\eps\le  \E\Bigl[\frac 1\eps\int_{\tau}^{\tau+\eps }\1_{\{T>t\}}\bigl(-  \rho\CL^2 \bigl(2^{-(2\theta-1)} |Z_t|^{2\theta}-w_t^{2\theta} \bigr)
		+ \sigma_t^2\bigr)\,\rd t\Bigr].
		$$
		By non-negativity and right-continuity of $(Z_t)_{t \ge 0}$ and convexity of $|\cdot|^{2\theta}$ one has
		$$
		\liminf_{\eps\downarrow0} \E\Bigl[\frac 1\eps \int_{\tau }^{\tau+\eps} \1_{\{T>t\}} |Z_t|^{2\theta}\,\rd t\Bigr]\ge \E[|Z_\tau|^{2\theta}]\ge |\E[Z_\tau]|^{2\theta}=\bar \zeta_\tau^{2\theta}.
		$$
		Consequently,
		\begin{align} \label{eq:Revision1}
		\limsup_{\eps\downarrow0} \frac {\zeta_{\tau+\eps}-\zeta_\tau}\eps\le -\rho \L^2(2^{-(2\theta-1)} \bar \zeta_\tau^{2\theta} -w_\tau^{2\theta})+ \sigma_\tau^2
		\end{align}
		Conversely, $(\Phi^{(R)}_t)_{t\ge 0}$ is  the solution of the differential equation
		\begin{align*}
			\dot	\Phi^{(R)}_t=-\eta (\Phi^{(R)}_t)^{2\theta}, \Phi^{(R)}_0=R
		\end{align*} 
		so that
		$$
		\dot{\bar \zeta}_\tau= -\eta (\Phi^{(R)}_\tau)^{2\theta}+\alpha \dot v_\tau.
		$$
		Note that if the  right-hand side of (\ref{eq:Revision1}) is strictly smaller than $\dot{\bar \zeta}_\tau$, i.e., if
		\begin{align}\label{eq845}
		-\rho \L^2(2^{-(2\theta-1)} \bar \zeta_\tau^{2\theta} -w_\tau^{2\theta})+ \sigma_\tau^2<-\eta (\Phi^{(R)}_\tau)^{2\theta}+\alpha \dot v_\tau,
		\end{align}
		then there exists an interval $[\tau,\tau+\eps]$ on which $\zeta_t\le \bar \zeta_t$ which entails that $\zeta$ cannot overtake $\bar \zeta$ at time $\tau$ which is a contradiction to the choice of $\tau$. Using that $w_\tau=C_w v_\tau$, $\dot v_\tau\ge -\kappa_1v_\tau^{2\theta}$ and $\sigma_\tau^2\le \kappa_2 v_\tau^{2\theta}$ we conclude that (\ref{eq845}) is satisfied, if
		$$
		\rho\L^22^{-(2\theta-1)}(\underbrace{\Phi_\tau^{(R)}+\alpha v_\tau}_{=\bar \zeta_\tau})^{2\theta}>\eta (\Phi_\tau^{(R)} )^{2\theta} +\frac{\rho\L^2 C_w^{2\theta}+\kappa_2+\kappa_1\alpha}{\alpha^{2\theta}} (\alpha v_\tau)^{2\theta}.
		$$
		By choice of $\alpha$ and $\eta$ the latter inequality holds so that we produced a contradiction. This finishes the proof.
	\end{proof}

	\begin{rem} \label{rem:loja1} 
	We discuss Proposition~\ref{prop:loja1} in the case where the  critical level  $\ell$  is a local minimum and  choose $C_w=0$ in the proposition.  
		For given $\theta\in(\frac12,1)$, we note that for $v_t=(t+1)^{-1/(2\theta-1)}$  the condition on the left-hand side of~(\ref{eq:4}) is satisfied for an appropriate constant~$\kappa_1$ and if $\sigma_t=\cO((t+1)^{-\theta/(2\theta-1)})$, then also  the second condition is satisfied for an appropriate  $\kappa_2$. In this case,
$(v_t)_{t\ge0}$ is of the same order as $(\Phi_t^{(R)})_{t\ge0}$ so that
$$
\E[\1_{\{T>t\}} (F(X_t)-\ell)]=\cO\bigl((t+1)^{-1/(2\theta-1)}\bigr).
$$
This agrees with the corresponding estimates for the ODE convergence. So if $(\sigma_t)_{t \ge 0}$ is of order $ \cO((t+1)^{-\theta/(2\theta-1)}$ we essentially get the same order of convergence as in the deterministic setting.

Now suppose that $\sigma_t\approx (t+1)^{-\sigma}$ with $ 0\le \sigma<\theta/(2\theta-1)$. Then one can choose $v_t=(t+1)^{-\sigma/\theta}$ and constants  $\kappa_1$ and $\kappa_2$ appropriately so that~(\ref{eq:4}) is satisfied. 
So in that case
$$
\E[\1_{\{T>t\}} (F(X_t)-\ell)]=\cO\bigl((t+1)^{-\sigma/\theta}\bigr).
$$
Altogether, we thus get for the choice $\sigma_t\approx (t+1)^{-\sigma}$ with $ \sigma \in [0,\infty)$ that
$$
\E[\1_{\{T>t\}} (F(X_t)-\ell)]=\cO\bigl((t+1)^{-(\frac \sigma\theta\wedge \frac 1{2\theta-1})}\bigr).
$$
Observing that on a compact set $F$, $f$ and the Hessian are uniformly bounded it is straight-forward to infer the statement of Theorem~\ref{the:rate}.
	\end{rem}
	
	
	\subsection{Bounding the drift term in the case where no dropout occurs}\label{sec32}
	
	\begin{prop} \label{prop83256}
		Suppose that $T$ is such that on $\{T\ge t\}$, $\langle f(X_t),-\alpha_t\rangle\ge 0$ and $(F(X_t))_{t \ge 0}$ converges on $\{T=\infty\}$, almost surely. Let $\Phi:[0,\infty)\to[0,\infty)$ be a decreasing, continuously differentiable function so that $t\mapsto -\dot \Phi_t$ is decreasing and
		$$
		\lim_{t\to\infty} \frac {\Phi_t}{\sqrt{-\dot \Phi_t}}=0.
		$$
		If for every $t\ge0$,
		$$
		\E[\1_{\{T> t\}} (F(X_t)-F(X_T))]+\frac12 \E\Bigl[\int_t^\infty \1_{\{s\le T\}} \tr(\beta_s^\dagger H(X_s) \beta_s)\,\rd  s\Bigr]\le \Phi_t
		$$
		and  $(M_t)_{t \ge 0}$ given by
		$$
		M_t:=\int_0^{t\wedge T} \langle f(X_s), \beta_s\,\rd W_s\rangle
		$$
		is a regular martingale, then
		$$
		\E\Bigl[\int_{0}^\infty \1_{\{s<T\}} \sqrt{\langle f(X_s),-\alpha_s\rangle}\,\rd s\Bigr]\le \int_0^\infty \sqrt{-\dot \Phi_s}\,\rd s.
		$$
	\end{prop}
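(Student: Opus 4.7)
The plan is to introduce the non-negative, decreasing, absolutely continuous function
\[
G(t):=\E\Bigl[\int_t^\infty \1_{\{s<T\}}\langle f(X_s),-\alpha_s\rangle\,\rd s\Bigr],
\]
whose a.e.~derivative is $-\dot G(t)=\E[\1_{\{t<T\}}\langle f(X_t),-\alpha_t\rangle]$, and to reduce the claim to the deterministic inequality $\int_0^\infty\sqrt{-\dot G(t)}\,\rd t\le\int_0^\infty\sqrt{-\dot\Phi_t}\,\rd t$.

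First I would show that $G(t)\le\Phi_t$. Applying Itô's formula to $F$ between $t\wedge T$ and $t'\wedge T$ for $t'\ge t$, the regular martingale hypothesis on $M$ makes the stochastic-integral expectation vanish, and letting $t'\to\infty$ by dominated convergence (using that $F(X_t)$ converges on $\{T=\infty\}$ to $F(X_T)$ by assumption) yields
\[
G(t)=\E[\1_{\{T>t\}}(F(X_t)-F(X_T))]+\tfrac12\E\Bigl[\int_t^\infty \1_{\{s\le T\}}\tr(\beta_s^\dagger H(X_s)\beta_s)\,\rd s\Bigr],
\]
which is bounded by $\Phi_t$ by hypothesis (the indicators $\1_{\{s<T\}}$ and $\1_{\{s\le T\}}$ agree a.e.~in $s$). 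A pointwise Cauchy--Schwarz bound then gives
\[
\E\bigl[\1_{\{t<T\}}\sqrt{\langle f(X_t),-\alpha_t\rangle}\bigr]\le\sqrt{\P(T>t)}\sqrt{-\dot G(t)}\le\sqrt{-\dot G(t)},
\]
and Tonelli turns the left-hand side of the claim into $\int_0^\infty\E[\1_{\{t<T\}}\sqrt{\langle f(X_t),-\alpha_t\rangle}]\,\rd t\le\int_0^\infty\sqrt{-\dot G(t)}\,\rd t$.

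The remaining analytic inequality uses only that $0\le G\le\Phi$ with both decreasing, that $-\dot\Phi$ is decreasing, and that $\Phi_t/\sqrt{-\dot\Phi_t}\to 0$. I would apply AM--GM in the form $\sqrt{-\dot G(t)}\le \tfrac12\bigl(\lambda(t)+(-\dot G(t))/\lambda(t)\bigr)$ with the specific choice $\lambda(t):=\sqrt{-\dot\Phi_t}$ and bound $\int_0^\infty(-\dot G)/\lambda\,\rd t$ by two successive integrations by parts: first transferring the derivative from $G$ onto $1/\lambda$, using $G\le\Phi$ in the resulting integrand (which is non-negative because $-\dot\lambda\ge 0$ as $-\dot\Phi$ is decreasing), and then transferring the derivative from $1/\lambda$ back onto $\Phi$, arriving at $\int_0^\infty(-\dot\Phi_t)/\lambda(t)\,\rd t=\int_0^\infty\lambda(t)\,\rd t$. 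Combining the two halves of AM--GM gives $\int\sqrt{-\dot G}\,\rd t\le\int\sqrt{-\dot\Phi_t}\,\rd t$, completing the proof.

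The main obstacle is the careful accounting of boundary terms at infinity in the two integrations by parts, and this is precisely where the hypothesis $\Phi_t/\sqrt{-\dot\Phi_t}\to 0$ enters: it forces $\Phi_t/\lambda(t)\to 0$ and hence, via $G\le\Phi$, also $G(t)/\lambda(t)\to 0$, so both boundary contributions at $\infty$ vanish, while the boundary terms at $0$ combine with $G(0)\le\Phi(0)$ into a non-positive remainder. A secondary technical point is justifying the absolute continuity of $G$ and the interchange of limit and expectation in the Itô step; both follow from the regular martingale hypothesis on $M$ together with a standard dominated convergence argument.
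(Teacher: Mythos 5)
Your proposal is correct and follows the same skeleton as the paper's proof: define $\Psi(t)=G(t)$, establish $G\le\Phi$ via It\^o's formula, and then close via an integration-by-parts argument that transfers the derivative onto the weight $\varphi=1/\sqrt{-\dot\Phi}$ and uses $G\le\Phi$ together with the boundary condition $\Phi_t/\sqrt{-\dot\Phi_t}\to 0$. The one organizational difference lies in how you get from $G\le\Phi$ to the final bound. The paper applies a single weighted Cauchy--Schwarz on the product space $\Omega\times[0,\infty)$ with weight $\varphi_s$, giving
\[
\E\Bigl[\int_0^\infty\1_{\{s<T\}}\sqrt{\langle f(X_s),-\alpha_s\rangle}\,\rd s\Bigr]\le\Bigl(\int_0^\infty\varphi_s\,\E[\1_{\{s<T\}}\langle f(X_s),-\alpha_s\rangle]\,\rd s\Bigr)^{1/2}\Bigl(\int_0^\infty\varphi_s^{-1}\,\rd s\Bigr)^{1/2},
\]
and then bounds the first factor by $\int\sqrt{-\dot\Phi}$ exactly as you do. You instead apply a pointwise Cauchy--Schwarz on $\Omega$ at each fixed $t$ to land on $\sqrt{-\dot G(t)}$, which reduces the remainder of the argument to the purely deterministic inequality $\int_0^\infty\sqrt{-\dot G}\le\int_0^\infty\sqrt{-\dot\Phi}$ (for $0\le G\le\Phi$ decreasing, $-\dot\Phi$ decreasing, $\Phi/\sqrt{-\dot\Phi}\to0$), and then you close with AM--GM in place of the square-root Cauchy--Schwarz estimate. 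The two routes yield the same bound and rest on the same two integrations by parts; your version isolates a clean, probability-free lemma, which arguably makes the structure slightly more transparent, at the cost of one extra step in the reduction. The detail you flag about absolute continuity of $G$ and the handling of boundary terms is correctly identified and is exactly where the paper's hypotheses are used.
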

	
	\begin{proof}Since $(M_t)_{t \ge 0}$ is a regular martingale we get with It\^o's formula, see  (\ref{eq:itoF}), that
		\begin{align*}
		\Psi(t):=&\ \E\Bigl[\int_{t}^\infty \1_{\{s<T\}}\langle f(X_s),-\alpha_s\rangle\,\rd s\Bigr]\\
		=&\  \E\bigl[\1_{\{T>t\}} (F(X_{t})-F(X_T))\bigr]+\frac 12 \E\Bigl[\int_t^\infty \1_{\{T>s\}} \tr(\beta_s^\dagger H(X_s) \beta_s)\,\rd  s\Bigr]\le \Phi_t. 
		\end{align*}
		The function $\Psi$ is right-continuous and monotonically decreasing and, hence, c\`adl\`ag.
		We let $\varphi_s:=(-\dot\Phi_s)^{-1/2}$ and note that by Cauchy-Schwarz
		\begin{align*}
		\E\Bigl[\int_{0}^\infty \1_{\{s<T\}} \sqrt{\langle f(X_s),-\alpha_s\rangle}\,\rd s\Bigr]\le \Bigl( \int_{0}^\infty \varphi_s \E\bigl[\1_{\{s<T\}}  \langle f(X_s),-\alpha_s\rangle\bigr]\,\rd s\Bigr)^{1/2}\Bigl(\int_0^\infty \varphi_s^{-1}\,\rd s\Bigr)^{1/2}.
		\end{align*}
		Moreover, using partial integration we get that
		\begin{align*}
		\int_{0}^\infty \varphi_s \E\bigl[\1_{\{s<T\}}  \langle f(X_s),-\alpha_s\rangle\bigr]\,\rd s&=-\int_{0}^\infty \varphi_s\,\rd \Psi_s\\
		&= {-}[\varphi \Psi]_0^\infty+\int_0^\infty \Psi_s\,\rd \varphi_s.
		\end{align*}
		$(\varphi_s)_{s \ge 0}$ is increasing by assumption and $\varphi_s \Phi_s\to0$ as $s\to \infty$ so that 
		\begin{align*}
		{ -}[\varphi \Psi]_0^\infty+\int_0^\infty \Psi_s\,\rd \varphi_s&\le {-}\varphi_0 (\Phi_0-\Psi_0){-}[\varphi\Phi]_0^\infty +\int_0^\infty \Phi_s\,\rd \varphi_s\\
		&={-}\varphi_0 (\Phi_0-\Psi_0)+\int_{0}^\infty \underbrace{\varphi_s (-\dot\Phi_s)}_{= \sqrt{-\dot \Phi_s}}\,\rd s.
		\end{align*}
		Altogether we get that
		$$
		\E\Bigl[\int_{0}^\infty \1_{\{s<T\}} \sqrt{\langle f(X_s),-\alpha_s\rangle}\,\rd s\Bigr]\le\int_0^\infty \sqrt{-\dot \Phi_s}\,\rd s.
		$$
	\end{proof}

	\subsection{Technical analysis of lower dropouts}\label{sec33}
	
	Roughly speaking, the following two lemmas will later be used to show that for a certain critical level of the target function, a lower dropout (in the sense of the previous two propositions) entails that the Lyapunov function converges to a value strictly below the respective critical level with high probability.

	\begin{lemma}\label{le:mart}
		Let $(M_t)_{t \ge 0}$ be a continuous local martingale started in zero. Then for every $\kappa>0$
		$$
		\P\Bigl(\sup_{t\ge 0} (M_t-\langle M\rangle_t) \ge \kappa\Bigr)\le \frac 1{\kappa^2}+\sum_{n\in\N_0} \frac{2^{n+1}}{(2^n+\kappa)^2}=:\phi(\kappa).
		$$
		In particular, for every $\eps>0$ there exists a $\kappa>0$ such that the above right-hand side is smaller than $\eps$. 
	\end{lemma}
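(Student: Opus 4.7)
\medskip

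\noindent\textbf{Proof plan for Lemma~\ref{le:mart}.}
The plan is to stratify the exceedance event $\{\sup_{t\ge 0}(M_t-\langle M\rangle_t)\ge\kappa\}$ according to the dyadic level of $\langle M\rangle$ at which the excess is achieved. For each $c>0$ introduce the stopping time
$$
\tau_c:=\inf\{t\ge 0:\langle M\rangle_t\ge c\},
$$
and note that the stopped process $M^{\tau_c}$ is a continuous local martingale whose quadratic variation $\langle M\rangle^{\tau_c}$ is bounded by $c$. This makes $M^{\tau_c}$ a genuine $L^2$-bounded martingale with $\E[(M_\infty^{\tau_c})^2]=\E[\langle M\rangle_{\tau_c}]\le c$, so it admits an almost sure terminal value and Doob's $L^2$ maximal inequality applies.

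Next I would argue that if $\sup_{t\ge 0}(M_t-\langle M\rangle_t)\ge\kappa$ at some time $t^*$, then either $t^*<\tau_1$ (so $\langle M\rangle_{t^*}<1$) or $t^*\in[\tau_{2^n},\tau_{2^{n+1}})$ for some $n\in\N_0$. In the first case $M_{t^*}\ge\kappa+\langle M\rangle_{t^*}\ge\kappa$, and Doob applied to $M^{\tau_1}$ yields
$$
\P\bigl(\sup\nolimits_{t\ge 0} M^{\tau_1}_t\ge\kappa\bigr)\le \tfrac{1}{\kappa^2}.
$$
In the second case $M_{t^*}\ge\kappa+\langle M\rangle_{t^*}\ge\kappa+2^n$, so $\sup_t M^{\tau_{2^{n+1}}}_t\ge\kappa+2^n$, and Doob applied to $M^{\tau_{2^{n+1}}}$ gives
$$
\P\bigl(\sup\nolimits_{t\ge 0} M^{\tau_{2^{n+1}}}_t\ge\kappa+2^n\bigr)\le \frac{2^{n+1}}{(2^n+\kappa)^2}.
$$
Summing these bounds over $n\in\N_0$ and adding the $n=-1$ contribution yields the claimed estimate $\phi(\kappa)$.

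For the final ``in particular'' clause I would split the dyadic sum at the index $n^*$ with $2^{n^*}\approx\kappa$: terms with $2^n\le\kappa$ are bounded by $2^{n+1}/\kappa^2$ and sum to $O(1/\kappa)$, while terms with $2^n>\kappa$ are bounded by $2^{n+1}/2^{2n}=2^{1-n}$ and sum to $O(1/\kappa)$ as well. Combined with $1/\kappa^2$ this gives $\phi(\kappa)=O(1/\kappa)\to 0$ as $\kappa\to\infty$.

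The proof is entirely routine; the only subtle point is making sure that on $\{\tau_c=\infty\}$ the decomposition still makes sense (simply interpret the empty window $[\tau_{2^n},\tau_{2^{n+1}})$ as contributing nothing), and that Doob's maximal inequality for the one-sided supremum of an $L^2$-bounded martingale is applied in the form $\P(\sup_t M_t\ge\lambda)\le\E[M_\infty^2]/\lambda^2$, which follows from Doob applied to the submartingale $M^2$ together with $\{\sup_t M_t\ge\lambda\}\subset\{\sup_t M_t^2\ge\lambda^2\}$ for $\lambda\ge 0$.
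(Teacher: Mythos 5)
Your proof is correct and follows essentially the same route as the paper: dyadic stopping times at the levels $2^n$ of the quadratic variation $\langle M\rangle$, the observation that $M_{t^*}\ge\kappa+2^n$ whenever the excess occurs while $\langle M\rangle_{t^*}\ge 2^n$, Doob's $L^2$ maximal inequality applied to each stopped martingale $M^{\tau_{2^{n+1}}}$ (using $\E[(M_\infty^{\tau_{2^{n+1}}})^2]=\E[\langle M\rangle_{\tau_{2^{n+1}}}]\le 2^{n+1}$), and a union bound over $n$. The only cosmetic differences are the choice of strict versus non-strict inequality in the stopping times (immaterial for continuous $\langle M\rangle$) and that the paper phrases the stratification in terms of suprema over the windows $[T_n,T_{n+1})$ rather than a single time $t^*$; your closing analysis of the tail $\phi(\kappa)=O(1/\kappa)$ is correct and supplies the ``in particular'' clause, which the paper leaves implicit.
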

	
	\begin{proof}
		For $n \in \N_0$, let $T_n:=\inf\{t\ge 0: \langle M\rangle _{t}>2^n\}$. Then we have
		\begin{align*}
		\sup_{t\in [T_n,T_{n+1})} (M_t-\langle M\rangle_t)\le \sup_{t \in [0,T_{n+1})} M_t -2^n
		\end{align*}
		and
		$$
		\sup_{t \in[0,T_0)} (M_t-\langle M\rangle_t)\le \sup_{t\in [0,T_{0})} M_t.
		$$ 
		We use Doob's $L^2$-inequality to deduce that
		$$
		\P\Bigl(\sup_{t\in [0,T_{n+1})} M_t \ge 2^{n}+\kappa\Bigr)\le  (2^{n}+\kappa)^{-2} \E[M_{T_{n+1}}^2]\le    \frac{2^{n+1}}{(2^{n}+\kappa)^{2}}
		$$
		and
		$$
		\P\Bigl(\sup_{t\in [0,T_{0})} M_t\ge \kappa\Bigr)\le \frac 1{\kappa^2}.
		$$
		Therefore,
		$$
		\P\Bigl(\sup_{t\ge 0} (M_t-\langle M\rangle_t) \ge \kappa\Bigr)\le \frac 1 {\kappa^2}+\sum_{n\in\N_0} \frac{2^{n+1}}{(2^n+\kappa)^2}.
		$$
	\end{proof}

	\begin{lemma}\label{le:47236}
		Let $t_0\ge0$, $\rho,\kappa:[t_0,\infty)\to(0,\infty)$ be functions  and $t_0\le T'\le T$ be two stopping times such that,  for every $t\ge t_0$, on $\{T'\le t\le T\}$,
		$$|\beta_t^\dagger f(X_t)|^2\le \rho_{T'} \langle f(X_t), -\alpha_t\rangle \text{ \ 
		and \ }
		\frac 12 \int_{T'}^{t} \tr(\beta_s^\dagger H(X_s)\beta_s)\,\rd s\le \kappa_{T'}.
		$$
		Then,
		\begin{align*}
		\P\Bigl(\sup_{t\in[T',T]}& F(X_t)-F(X_{T'}) \ge 2\kappa_{T'}\Bigr) \le \phi\Bigl(\inf\limits_{t\ge t_0} \frac {\kappa_{t}}{\rho_{t}}\Bigr).
		\end{align*}
	\end{lemma}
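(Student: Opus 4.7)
The plan is to combine It\^o's formula for $F(X_t)$ with Lemma~\ref{le:mart}. Using~(\ref{eq:itoF}) between $T'$ and $t\in[T',T]$ and introducing the continuous local martingale
\begin{align*}
M_t := \int_0^t \1_{(T',T]}(s)\,\langle f(X_s),\beta_s\,\rd W_s\rangle,
\end{align*}
one has, for $t\in[T',T]$,
\begin{align*}
F(X_t)-F(X_{T'}) = -\int_{T'}^t \langle f(X_s),-\alpha_s\rangle\,\rd s + M_t + \tfrac12\int_{T'}^t \tr(\beta_s^\dagger H(X_s)\beta_s)\,\rd s.
\end{align*}
The first hypothesis $|\beta_s^\dagger f(X_s)|^2\le\rho_{T'}\langle f(X_s),-\alpha_s\rangle$ forces $\langle f(X_s),-\alpha_s\rangle\ge 0$ on $[T',T]$, so the drift integral is non-positive; the Hessian integral is bounded by $\kappa_{T'}$ by the second hypothesis. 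Hence on the event $\{\sup_{t\in[T',T]}F(X_t)-F(X_{T'})\ge 2\kappa_{T'}\}$ there exists $t\in[T',T]$ with $M_t - \int_{T'}^t \langle f(X_s),-\alpha_s\rangle\,\rd s \ge \kappa_{T'}$.

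Next, I would use the first hypothesis once more to bound the quadratic variation
\begin{align*}
\langle M\rangle_t =  \int_{T'\wedge t}^{T\wedge t} |\beta_s^\dagger f(X_s)|^2\,\rd s \le \rho_{T'}\int_{T'\wedge t}^{T\wedge t} \langle f(X_s),-\alpha_s\rangle\,\rd s,
\end{align*}
so that on the bad event $M_t-\rho_{T'}^{-1}\langle M\rangle_t\ge \kappa_{T'}$. To put this into the form required by Lemma~\ref{le:mart}, rescale by the strictly positive, $\cF_{T'}$-measurable random variable $\rho_{T'}$ and set $\hat M:=M/\rho_{T'}$. Passing to the time-shifted filtration $(\cF_{T'+s})_{s\ge 0}$, in which $\rho_{T'}$ is measurable at time $0$, turns $\hat M$ into a continuous local martingale started at $0$ with $\langle\hat M\rangle=\rho_{T'}^{-2}\langle M\rangle$; dividing the previous inequality by $\rho_{T'}$ yields $\hat M_t - \langle\hat M\rangle_t \ge \kappa_{T'}/\rho_{T'} \ge \kappa^* := \inf_{t\ge t_0}\kappa_t/\rho_t$, and $\kappa^*$ is deterministic.

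Finally, Lemma~\ref{le:mart} applied to $\hat M$ in the shifted filtration gives
\begin{align*}
\P\Bigl(\sup_{s\ge 0}\hat M_{T'+s}-\langle\hat M\rangle_{T'+s}\ge\kappa^*\Bigr)\le\phi(\kappa^*),
\end{align*}
and the inclusion of events established above finishes the proof. The only delicate point is the random rescaling by the $\cF_{T'}$-measurable factor $\rho_{T'}$: one needs to verify that $\hat M$ retains the continuous local martingale property on a suitable filtration. As sketched, one passes to the shifted filtration $(\cF_{T'+s})_{s\ge 0}$; equivalently, one may condition on $\cF_{T'}$ and treat $\rho_{T'}$ as a deterministic positive constant before invoking Lemma~\ref{le:mart}.
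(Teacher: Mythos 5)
Your proposal is correct and follows essentially the same route as the paper: Itô's formula, the local martingale $M_t=\int_0^t \1_{\{T'\le s\le T\}}\langle f(X_s),\beta_s\,\rd W_s\rangle$, the bound $\Xi_t=M_t-\rho_{T'}^{-1}\langle M\rangle_t$, and then Lemma~\ref{le:mart} applied to $M/\rho_{T'}$. The paper justifies that $M/\rho_{T'}$ is a continuous local martingale started at zero directly (the integrand $\1_{\{T'\le s\le T\}}\rho_{T'}^{-1}\beta_s^\dagger f(X_s)$ is progressively measurable because $\rho_{T'}$ is $\cF_{T'}$-measurable and the indicator vanishes before $T'$), whereas you reach the same conclusion via the shifted filtration or conditioning on $\cF_{T'}$; these are interchangeable justifications of the one delicate step you correctly identified.
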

	
	\begin{proof}
		We use again  representation (\ref{eq:1}):
		\begin{align*}
		\rd F(X_t)&= \langle f(X_t) ,\alpha_t\rangle \,\rd t+ \langle f(X_t),\beta_t\,\rd W_t\rangle +\frac 12 \tr(\beta_t^\dagger H(X_t) \beta_t)\,\rd t.
		\end{align*}
		Consider the local martingale
		$$
		(M_t)_{t\ge t_0}:=\Bigl( \int_0^t \1_{\{T'\le s\le T\}} \langle f(X_s),\beta_s\,\rd W_s\rangle \Bigr)_{t \ge t_0}
		$$
		and note that
		$$
		\langle M\rangle_t=\int_0^t \1_{\{T'\le s\le T\}} |\beta_s^\dagger f(X_s)|^2\,\rd s\le \rho_{T'} \int_0^t \1_{\{T'\le s\le T\}} \langle f(X_s),-\alpha_s\rangle\,\rd s.
		$$
		Hence, 
		\begin{align*}
		\int_0^t \1_{\{T'\le s\le T\}}  \langle f(X_s),\alpha_s\rangle \, \rd s &+ \int_0^t \1_{\{T'\le s\le T\}} \langle f(X_s),\beta_s\,\rd W_s\rangle\\
		&\le M_t -\frac {1}{\rho_{T'}} \langle M\rangle_t =:\Xi_t.
		\end{align*}
		Note that $\Xi_t=\rho_{T'}(\frac {1}{\rho_{T'}} M_t-\langle \frac{1}{\rho_{T'}} M\rangle_t)$ with $(\frac {1}{\rho_{T'}} M_t)_{t \ge t_0}$ being a continuous local martingale started in zero so that with Lemma~\ref{le:mart} 
		\begin{align*}
		\P\Bigl(\sup_{t\ge t_0} \Xi_t \ge \kappa_{T'}\Bigr) &= \P\Bigl(\sup_{t\ge t_0}\frac {1}{\rho_{T'}} M_t-\langle \frac{1}{\rho_{T'}} M\rangle_t
		\ge \frac{\kappa_{T'}}{\rho_{T'}}\Bigr)\\
		&\le \P\Bigl(\sup_{t\ge t_0}\frac {1}{\rho_{T'}} M_t-\langle \frac{1}{\rho_{T'}} M\rangle_t
		\ge \inf\limits_{t\ge t_0} \frac {\kappa_{t}}{\rho_{t}}\Bigr) \le \phi\Bigl(\inf\limits_{t\ge t_0} \frac {\kappa_{t}}{\rho_{t}}\Bigr).
		\end{align*}
		Altogether, using
		\begin{align*}
			\sup_{T'\le t\le T} F(X_t)-F(X_{T'}) \le \sup_{T'\le t\le T} \Xi_t + \underbrace{\sup_{T'\le t\le T} \frac 12 \int_{T'}^{t} \tr(\beta_s^\dagger H(X_s)\beta_s)\,\rd s}_{\le \kappa_{T'}}
		\end{align*}
		we get that
		\begin{align*}
		\P\Bigl(\sup_{T'\le t\le T} F(X_t)-F(X_{T'}) \ge 2\kappa_{T'}\Bigr) &\le 
		\P\Bigl(\sup_{t \ge t_0} \Xi_t \ge \kappa_{T'} \Bigr) \le \phi\Bigl(\inf\limits_{t\ge t_0} \frac {\kappa_{t}}{\rho_{t}}\Bigr).
		\end{align*}
	\end{proof}

	\subsection{Proof of Theorem~\ref{theo2}}\label{sec36}
	
	We cite Lemma~3.7 from \cite{dereich2021convergence}.
	
	\begin{lemma}\label{lem:lojaset}Let $F:\R^d\to\R$ be a \loja-function with differential $f$ and $K\subset \R^d$ be an arbitrary compact set. 
		\begin{enumerate}
			\item The set of \emph{critical levels}
			$$
			\cL_K:=\{F(x):x\in \cC_F\cap K\}
			$$
			is finite so that $F$ has at most a countable number of critical levels.
			\item For every critical level $\ell\in \cL_K$ there exists an open neighbourhood $U\supset  F^{-1}(\{\ell\})\cap K$, $\CL>0$, $\theta\in[\frac 12 ,1)$ such that for every $y\in U$
			$$
			|f(y)|\ge \CL |F(y)-\ell|^\theta.
			$$
			\item For a neighbourhood as in (2), there exists $\eps>0$ such that
			$$ F^{-1}((\ell-\eps,\ell+\eps))\cap K\subset U.
			$$
		\end{enumerate}
	\end{lemma}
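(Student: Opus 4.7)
The plan is to handle the three assertions sequentially, all resting on one key observation: if $x\in\cC_F$ and $y\in U_x\cap\cC_F$, then the \loja-inequality forces $0=|f(y)|\ge \CL|F(y)-F(x)|^{\theta}$, so $F(y)=F(x)$. In other words, $F$ is locally constant along $\cC_F$. For assertion (1) I would cover the compact set $\cC_F\cap K$ by finitely many \loja-neighbourhoods $U_{x_1},\dots,U_{x_n}$; by the observation above $\cL_K\subset\{F(x_1),\dots,F(x_n)\}$ is finite. Writing $\R^d$ as a countable union of compact sets then shows that $F$ has at most countably many critical levels overall.

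For (2), fix $\ell\in\cL_K$ and cover the compact set $F^{-1}(\{\ell\})\cap K$ pointwise according to two cases. At a critical $y$ with $F(y)=\ell$, take the \loja-neighbourhood $U_y$ with parameters $\CL_y,\theta_y$; the \loja-inequality then reads $|f(z)|\ge \CL_y|F(z)-\ell|^{\theta_y}$ for $z\in U_y$. At a non-critical $y$ with $F(y)=\ell$, choose instead a ball on which $|f|\ge |f(y)|/2$ and $|F-\ell|\le 1$; on such a ball the inequality $|f(z)|\ge (|f(y)|/2)\,|F(z)-\ell|^{\theta}$ holds trivially for any $\theta\in[\tfrac12,1)$. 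Extracting a finite subcover, forming its union $U_0$, and replacing it by $U:=U_0\cap\{y\in\R^d:|F(y)-\ell|<1\}$, I would then set $\theta:=\max_i\theta_i\in[\tfrac12,1)$ and $\CL:=\min_i\CL_i$. Because $|F(y)-\ell|<1$ on $U$, we have $|F(y)-\ell|^{\theta}\le |F(y)-\ell|^{\theta_i}$ for the cover index $i$ to which $y$ belongs, and the pointwise estimates uniformise to $|f(y)|\ge \CL|F(y)-\ell|^{\theta}$.

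Assertion (3) is then immediate: $K\setminus U$ is compact (closed in the compact $K$) and, by construction, disjoint from $F^{-1}(\{\ell\})$, so $\eps:=\inf\{|F(y)-\ell|:y\in K\setminus U\}$ is strictly positive (or $+\infty$ if $K\subset U$), and $F^{-1}((\ell-\eps,\ell+\eps))\cap K\subset U$. The only mildly delicate point is the uniformisation step in (2): the shrinking to $U_0\cap\{|F-\ell|<1\}$ is essential, because this is what reverses the direction of the exponent comparison $|F-\ell|^{\theta}\le |F-\ell|^{\theta_i}$ when $\theta=\max_i\theta_i\ge\theta_i$, and allows a single pair $(\CL,\theta)$ to serve on the whole neighbourhood. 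Everything else is routine compactness together with continuity of $F$ and $f$.
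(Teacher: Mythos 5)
Your proof is correct, and it handles the one genuinely delicate point — making a single pair $(\CL,\theta)$ work on the whole neighbourhood — cleanly by shrinking to $\{|F-\ell|<1\}$ so that the comparison $|F-\ell|^{\theta}\le |F-\ell|^{\theta_i}$ for $\theta=\max_i\theta_i$ goes the right way; the key observation that $F$ is constant on $\cC_F\cap U_x$ (because $f=0$ there forces $|F-F(x)|^{\theta}\le 0$) is exactly what makes (1) a routine finite-subcover argument. Note that the paper itself does not prove this lemma but cites it as Lemma~3.7 of [DK21], so there is no in-paper proof to compare against; your argument is the standard compactness argument one would expect there, and I see no gaps.
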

	
	Now we are able to  prove the main result of this article.
	
	\begin{proof}[Proof of Theorem~\ref{theo2}]
		For $\rho,C, t_0 >0$ and a  set $K\subset \R^d$ let 
		\begin{align*}
		T_{\rho,C,t_0,K}:= \inf\Bigl\{ t\ge t_0: \ & {\langle f(X_t),-\alpha_t\rangle}< \rho |f(X_t)|^2  , \   |\alpha_t|> C|f(X_t)|, \ |f(X_t)|>C,  \\
		& 
		 |\beta_t|^2> C\sigma_t^2, \  \frac12 \mathrm {tr}(\beta_t^\dagger H(X_t)\beta_t)> C \sigma_t^2 \ \text{ or } \ X_t \notin K  \Bigr\}.
		\end{align*}
		Set $\theta_0:=\frac \sigma{2\sigma-1}\in(\frac12,1)$.
		
		1) First let $U$ be an open and bounded set such that for a $\theta\in (\theta_0,1)$, an $\ell\in\R$ and $\L>0$, for all $y\in U$
		$$
		|f(y)|\ge \L \,|F(y)-\ell|^\theta.
		$$
		
		1.a) We show that on $\{T_{\rho,C,t_0,U}=\infty\}$, almost surely, the random set
		$$
		S:=\{s\ge 0: F(X_s)<\ell- (s+1)^{-\frac 1{2\theta-1}}\}
		$$
		is bounded.

		By Proposition~\ref{prop:conv1} we have on $\{T_{\rho,C,t_0,U}=\infty\}$ that, almost surely, $(F(X_t))_{t \ge 0}$ converges and $f(X_t) \to 0$, which together with the \loja-inequality implies that $F(X_t) \to \ell$.

	 By choice of $\theta$ one has  $2\sigma > \frac{2\theta}{2\theta-1}$. Thus, for $t_1\ge t_0$ large enough we get for all $t \ge t_1$
	 $$
	 C\sigma_t^2 \le \frac{1}{4\theta-2} (t+1)^{-\frac{2\theta}{2\theta-1}}.
	 $$
	 Let $T'_{t_1}:= \inf\{t\ge t_1: F(X_t)<\ell- (t+1)^{-\frac 1{2\theta-1}}\}$, then
	 one has 
		$$
			\sup_{T'_{t_1}\le t\le T_{\rho,C,t_0,U}} \int_{T'_{t_1}}^t \frac 12 \tr(\beta_s^\dagger H(X_s)\beta_s)\,\rd s\le \frac{1}{4\theta-2} \int_{T'_{t_1}}^\infty (s+1)^{-\frac{2\theta}{2\theta-1}} \, \rd s = \frac{1}{2} (T'_{t_1}+1)^{-\frac{1}{2\theta-1}}.
		$$
		Furthermore,   we have for $t$ with  $T'_{t_1}\le t< T_{\rho,C,t_0,U}$
		$$
			|\beta_t^\dagger f(X_t)|^2\le |\beta_t^\dagger|^2 |f(X_t)|^2 \le \frac{C}{\rho} (T'_{t_1}+1)^{-2\sigma} \langle f(X_t), -\alpha_t\rangle
		$$
		so that, using Lemma~\ref{le:47236}, we get 
		\begin{align*}
			\P(S\cap [t_1,\infty)\not= \emptyset , T_{\rho,C,t_0,U} =\infty)&= \P(T'_{t_1}<\infty, T_{\rho,C,t_0,U}=\infty, F(X_t)\to \ell) \\
			&\le \P\Bigl(\sup_{t\in[T'_{t_1},T_{\rho,C,t_0,U})} F(X_t)-F(X_{T'_{t_1}}) \ge (T'_{t_1}+1)^{-\frac{1}{2\theta-1}}\Bigr)\\
			& \le \phi\Bigl(\frac {\rho  }{2 C}(t_1+1)^{2\sigma-\frac{1}{2\theta-1}}\Bigr) \overset{t_1 \to \infty}{\longrightarrow} 0
		\end{align*}
		and for all $t_1\ge t_0$
 		$$
			\P(S \text{ is unbounded}, T_{\rho,C,t_0,U}=\infty)\le \P(S\cap [t_1,\infty)\not= \emptyset , T_{\rho,C,t_0,U} =\infty) \to 0.
		$$

		1.b) Let 
		$$
		\bar T_{\rho,C,t_0,U}:= T_{\rho,C,t_0,U} \wedge \inf\{t\ge t_0: X_t<\ell-(t+1)^{-\frac 1{2\theta-1}} \}.
		$$
		We show that on $\{\bar T_{\rho,C,t_0,U}=\infty\}$ the process $(X_t)_{t \ge 0}$ converges, almost surely.

			We set $\sigma'_t:=\sqrt C \sigma_t$ and $v_t:=w_t:= (t+1)^{-\frac{1}{2\theta-1}}$ (so that $C_w=1$) and $T := \bar T_{\rho,C,t_0,U}$ and show that the functions satisfy the Assumptions of Proposition~\ref{prop:loja1}, Proposition~\ref{prop83256} and the right-hand side of the estimate in Proposition~\ref{prop83256} is finite.
			
			First note that one can apply Proposition~\ref{prop:loja1} for the process $(X_t)_{t\ge t_0}$ started at time $t_0$ in $X_{t_0}$. In particular, we have 
			$$
			-\dot v_t= \frac{1}{2\theta-1}(t+1)^{-\frac{2\theta}{2\theta-1}}= \frac{1}{2\theta-1} v_t^{2\theta}
			$$
			and with $2\sigma > \frac{2\theta}{2\theta-1}$
			$$
			v_t^{2\theta}=  (t+1)^{-\frac{2\theta}{2\theta-1}} \ge \frac{1}{C}(\sigma'_t)^2
			$$
			so that (\ref{eq:4}) is satisfied. Further, with the definition of $T_{\rho,C,t_0,K}$
			$$
				\E\Bigl[\int_{t_0}^{T} |\beta_s^\dagger f(X_s)|^2\, \rd s\Bigr]\le \int_{t_0}^\infty C^3 \sigma_t^2 \, \rd t < \infty,
			$$
			so that (\ref{eq7246}) is satisfied. Now, we can choose $\eta>0$ sufficiently small and $\alpha>0$ sufficiently large so that (\ref{eq:ineq:1}) is satisfied and with the boundedness of $U$ and Proposition~\ref{prop:loja1} we get for sufficiently large $R, C'>0$ that for all $t\ge t_0$
			$$
				\E\bigl[\1_{\{T>t\}} (F(X_t)-\ell+w_t) \bigr]\le \Phi^{(R)}_{t}  +\alpha v_t \le C' (t-t_0+1)^{-\frac{1}{2\theta-1}}.
			$$
			Regarding Proposition~\ref{prop83256} we recall that, on $\{T=\infty\}$, we almost surely have $F(X_t) \to \ell$ so that $F(X_T)$ is well-defined. By the definition of $T$ we have $-F(X_T)\le -\ell +w_T$, where $w_\infty=0$, and the monotonicity of $(w_t)$ gives
			$$
				\E[\1_{\{T> t\}} (F(X_t)-F(X_T))]\le \E[\1_{\{T> t\}} (F(X_t)-\ell +w_t)] \le C' (t-t_0+1)^{-\frac{1}{2\theta-1}}.
			$$
			Thus, we have
			\begin{align*}
				\E[\1_{\{T> t\}} &(F(X_t)-F(X_T))]+\frac12 \E\Bigl[\int_t^\infty \1_{\{s\le T\}} \tr(\beta_s^\dagger H(X_s) \beta_s)\,\rd  s\Bigr] \\
				&\le C' (t-t_0+1)^{-\frac{1}{2\theta-1}} + C \int_{t}^\infty \sigma_s^2 \, \rd s \le \Phi_t,
			\end{align*}
			where $\Phi_t:= C''(t-t_0+1)^{-\frac{1}{2\theta-1}}$ for sufficiently large $C''$. Moreover, $\dot \Phi_t$ is an increasing function so that $t \mapsto -\dot \Phi_t$ is decreasing and using that $\theta<1$ we get that
			$$
			\Phi_t/\sqrt{-\dot \Phi_t} = \sqrt{2\theta-1}\,  (C'')^{\theta-1/2}\,  (\Phi_t)^{1-\theta} \to 0.
			$$
			Moreover, $(M_t)_{t \ge t_0} :=\bigl(\int_{t_0}^{t\wedge T} \langle f(X_s), \beta_s\,\rd W_s\rangle\bigr)_{t \ge t_0}$ is a regular martingale as
			$$
				\langle M_t \rangle_\infty = \int_{t_0}^T |\beta_s^\dagger f(X_s)|^2 \, \rd s \le \int_{t_0}^\infty C^3 \sigma_t^2 \, \rd t < \infty.
			$$
			Therefore, Proposition~\ref{prop83256} is applicable and we get
			\begin{align*}
				\E\Bigl[\int_{t_0}^\infty \1_{\{s<T\}} |\alpha_s|\, \rd s\Bigr]&\le C \, \E\Bigl[\int_{t_0}^\infty \1_{\{s<T\}} |f(X_s)|\, \rd s\Bigr]\\
				&\le C\sqrt \rho \, \E\Bigl[\int_{t_0}^\infty \1_{\{s<T\}} \sqrt{\langle f(X_s),-\alpha_s\rangle}\,\rd s\Bigr]\\
				&\le C\sqrt \rho \, \int_{t_0}^\infty \sqrt{-\dot \Phi_s}\,\rd s= C  \sqrt {\frac{\rho C''}{2\theta-1}} \, \int_{t_0}^\infty (t-t_0+1)^{-\frac{\theta}{2\theta-1}}\,\rd s \\
				&=   C \sqrt{\rho C''}  \frac{ \sqrt {2\theta-1 }}{1-\theta} <\infty.
			\end{align*}
			This implies almost sure convergence of $(\int_{t_0}^t \alpha_s \, \rd s)_{t \ge t_0}$, on  $\{\bar T_{\rho,C,t_0,U}=\infty\}$.
			Note that also  $(\int_{t_0}^\infty \beta_t \, \rd W_t)_{t\ge t_0}$ converges almost surely on $\{\bar T_{\rho,C,t_0,U}=\infty\}$, 
			since
			$$
				\int_{t_0}^{\bar T_{\rho,C,t_0,U}} |\beta_s|_F^2 \, \rd s\le \int_{t_0}^\infty C \sigma_s^2 \, \rd s <\infty.
			$$
			Hence, we have almost sure convergence of
			$$
				X_t-X_{t_0} = \int_{t_0}^t \alpha_t \, \rd t + \int_{t_0}^t \beta_t \, \rd W_t,
			$$
			on $\{\bar T_{\rho,C,t_0,U}=\infty\}$.
		
		1.c) Combining 1.a) and 1.b) we conclude that on $\{T_{\rho,C,t_0,U}=\infty\}$ we have, almost sure, convergence of $(X_t)_{t \ge 0}$.
		Indeed, by 1.a) we have that up to nullsets $\{T_{\rho,C,t_0,U}=\infty\}$ is contained in 
		$$
		\bigcup_{n\in\N} \{\bar T_{\rho,C,t_0+n,U}=\infty\}
		$$
		and by 1.b) on each of the latter sets $(X_t)_{t \ge 0}$ converges almost surely.

		2) Now let $K\subset \R^d$ be a compact subset.
		We show that we have almost sure convergence of $(X_t)_{t \ge 0}$ on $\{T_{\rho,C,t_0,K}=\infty\}$. The statement then follows by observing that 
		$$
		\IC \cap \IL\cap \bigl\{ \limsup_{t \to \infty} \frac{|\beta_t|}{\sigma_t}<\infty \bigr\}=\bigcup_{n\in\N} \{T_{\frac 1n,n,n,[-n,n]^d}=\infty\}.
		$$
		
		By Lemma~\ref{lem:lojaset},  $K$ contains finitely many critical levels, say $\ell_1,\dots,\ell_m\in\R$, and there exists $\eps>0$, $\L>0$, $\theta\in[\frac \sigma{2\sigma-1},1)$ and bounded open neighbourhoods $U_k\supset F^{-1}((\ell_k-\eps,\ell_k+\eps))\cap K$ such that for each $k=1,\dots, m$ and $y\in U_k$,
		$$
		|f(y)|\ge \L |F(y)-\ell_k|^{\theta}.
		$$
		
		By Theorem~\ref{theo1}, we have that on $\{T_{\rho,C,t_0,K}=\infty\}$, almost surely, the limit $\lim_{t\to\infty} F(X_t)$ exists and $\lim_{t\to\infty} f(X_t)=0$. Since $\inf_{y\in K\backslash (U_1\cup\ldots \cup U_m)} |f(y)|>0$, we get that on $\{T_{\rho,C,t_0,U}=\infty\}$, almost surely, $(X_t)_{t \ge 0}$ attains from a random time onwards only values in $U_1\cup\ldots\cup U_m$ and using the \loja-inequality we get that $(F(X_t))_{t \ge 0}$ converges to one of the  levels $\ell_k$ and, in particular, attains from a random time onwards only values in a $U_k$ (which may be random). Hence, up to nullsets, $\{T_{\rho,C,t_0,K}=\infty\}$ is contained in
		$$
		\bigcup_{n\in\N}\bigcup_{k=1}^m \{T_{\rho,C,t_0+n,U_k}=\infty\}.
		$$
		By 1.c) we have almost sure convergence of $(X_t)_{t \ge 0}$ on each of the countably many latter events.
	\end{proof}

\section{Optimality of Theorem~\ref{theo2}} \label{sec:optimal}

In this section, we provide an example which satisfies the assumptions of Theorem~\ref{theo2} when choosing $\sigma_t=(t+1)^{-1}$, but for which $(X_t)_{t\ge0}$ does not converge.

We consider a rotationally invariant Lyapunov function $F$ on $\R^2$. Let 
$$
\psi:[0,\infty)\to[0,\infty), r\mapsto
\begin{cases}
(1-r)^2, & \text{if } r\ge \frac 12, \\
\frac 83 r^3-3r^2+\frac 23, & \text{else},
\end{cases}
$$
and set  $F:\R^2\to\R$, $F(x):=\psi(|x|)$. 
Note that $F$ is a $C^2$-function with critical points 
$$\cC:=f^{-1}(\{0\})= \{x\in\R^2: |x|\in\{0,1\}\},$$
where we again denote by $f:=\nabla F$ the gradient of $F$.
For every $(x_1,x_2)\in\R^2$ we let 
$$
\mathrm{ort}(x_1,x_2):= (-x_2,x_1).
$$
and denote by $\varphi:\R^2\to [0,1]$  a rotationally invariant $C^{\infty}$-function with
$$
\varphi|_{B(0,2)\backslash B(0,\frac { 2}{ 3})}\equiv 1\text{ \ and \ }\varphi|_{B(0,\frac {1}{ 2})\cup B(0,3)^c}\equiv 0.
$$
Consider the SDE
$$
dX_t= \Bigl(-f(X_t)+ \mathrm{ort}(X_t) \,  ||X_t|-1| \Bigr)\, dt + \frac 1{t+1} \varphi(X_t)\,d W_t
$$
started in a point $x_0\in\R^2\backslash\{0\}$,  where $(W_t)_{t\ge 0}$ is a $2$-dimensional Brownian motion with initial value $W_0=0$.

\begin{theorem} \label{thm:counter}
\begin{enumerate}
\item[(i)] 	$F$ is a \loja-function.
\item[(ii)] 	$\P(\IC \cap \IL)=1$.
\item[(iii)] Almost surely, $(X_t)_{t \ge 0}$ does not converge.
\end{enumerate}
\end{theorem}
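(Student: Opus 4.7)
For (i), I would verify the \loja-inequality at both types of critical points. Near $\{|x|=1\}$, $F(x) = (1-|x|)^2$ and $|f(x)|^2 = 4(1-|x|)^2 = 4F(x)$, so $|f(x)| = 2\sqrt{F(x)}$ and $\theta=\tfrac12$ works. At $0$, for $|x|\le\tfrac12$ one has $|f(x)| = |x|(6-8|x|) \ge 2|x|$ and $F(x) - \tfrac23 = -|x|^2(3 - \tfrac{8}{3}|x|)$ with $|F(x)-\tfrac23|\le 3|x|^2$, so $|f(x)|\ge \tfrac{2}{\sqrt 3}|F(x)-\tfrac23|^{1/2}$ and again $\theta=\tfrac12$. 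For (ii), the last two conditions in $\IC$ follow from $|\beta_t|_F\le \sqrt{2}\,(t+1)^{-1}$. The first two rely on the orthogonality $\langle f(X),\mathrm{ort}(X)\rangle = 0$ ($f$ is radial, $\mathrm{ort}(X)$ tangential), which gives $\langle f(X_t),-\alpha_t\rangle = |f(X_t)|^2$ and $|\alpha_t|^2 = |f(X_t)|^2 + |X_t|^2(|X_t|-1)^2$; hence $|f|/|\alpha|$ is bounded below on compact subsets of $\R^2\setminus\{0\}$. The event $\IL$ follows from a Lyapunov argument with the coercive $F$ (using $\int_0^\infty|\beta_s|_F^2\,ds<\infty$, as in Proposition~\ref{prop:conv1}). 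That $X_t$ stays bounded away from $0$ at late times holds because in $B(0,\tfrac12)$ the noise vanishes and the radial drift $-\psi'(r)=6r-8r^2$ is strictly positive on $(0,\tfrac34)$, making the origin inaccessible.

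\textbf{Part (iii): reduction to an integral estimate.} On the full-measure event $\IC\cap\IL$, Theorem~\ref{theo1} gives that $F(X_t)$ converges and $f(X_t)\to 0$; together with the previous paragraph this forces $r_t := |X_t|\to 1$ a.s. Writing $X_t = r_t(\cos\theta_t,\sin\theta_t)$ and applying Itô (with the orthogonal decomposition $dW = \hat X\,dB^r + \hat\theta\,dB^\theta$ into independent Brownian motions), one obtains
$$
	dr_t = \Bigl[-\psi'(r_t)+\tfrac{\varphi^2(X_t)}{2r_t(t+1)^2}\Bigr]dt+\tfrac{\varphi(X_t)}{t+1}\,dB^r_t,\quad d\theta_t = |r_t-1|\,dt + \tfrac{\varphi(X_t)}{r_t(t+1)}\,dB^\theta_t.
$$
Since $r_t\to 1$, the martingale part of $\theta_t$ has quadratic variation bounded by $C\int^\infty(s+1)^{-2}\,ds<\infty$, so it converges a.s. Non-convergence of $X_t$ will therefore follow once I show that $\int_0^\infty|r_s-1|\,ds = \infty$ a.s., since then $\theta_t\to\infty$ and $X_t$ keeps winding around the unit circle.

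\textbf{The main step.} Set $u_t := r_t-1$ and $Y_t := (t+1)u_t$. Using $-\psi'(r) = -2(r-1)$ for $r\ge\tfrac12$ (which holds eventually), a direct Itô computation yields
$$
	dY_t = -\Bigl(2 - \tfrac{1}{t+1}\Bigr)Y_t\,dt + dB^r_t + O(t^{-1})\,dt,
$$
so for large $t$, $Y_t$ is a vanishing perturbation of the autonomous OU SDE $dZ = -2Z\,dt + dB$ with stationary law $\pi = \cN(0,\tfrac14)$. Coupling $Y$ with such a $Z$ from a large deterministic time onwards (via an $L^2$-comparison exploiting the $O(t^{-1})$ size of the perturbation and Gronwall), the ergodic theorem for $Z$ yields, on dyadic blocks $[2^k,2^{k+1}]$ for large $k$,
$$
	\int_{2^k}^{2^{k+1}}|u_s|\,ds = \int_{2^k}^{2^{k+1}}\tfrac{|Y_s|}{s+1}\,ds \ge \tfrac{1}{2^{k+1}}\int_{2^k}^{2^{k+1}}|Y_s|\,ds \ge c > 0 \quad \text{a.s.}
$$
Summing over $k$ gives $\int_0^T|u_s|\,ds \ge c'\log T \to \infty$ a.s. The principal obstacle is making the ergodic-theorem step rigorous for the non-autonomous $Y_t$; the block-wise Gronwall comparison combined with a strong LLN for functionals of the genuine OU process on each block handles this.
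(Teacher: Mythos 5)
Parts (i) and (ii) of your write-up are correct and essentially parallel the paper: you verify the \loja-inequality by direct computation at both critical sets (the paper instead invokes full rank of $\Hess F(0)$ and $\psi''(1)=2$, but this is the same content), and for $\IC\cap\IL$ both arguments rest on the orthogonality $\langle f(X),\mathrm{ort}(X)\rangle=0$, the decay of $\beta$, and the confining structure of the drift outside the annulus. Your reduction of (iii) to $\int_0^\infty|r_s-1|\,ds=\infty$ via the angle SDE also matches the paper exactly, including the observation that the martingale part of $\theta_t$ converges.

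Where you genuinely diverge from the paper is the key analytic lemma establishing $\int_0^\infty|r_s-1|\,ds=\infty$. The paper compares $r_s-1$ directly to the solution $Z^{(t)}_s$ of the time-inhomogeneous linear SDE $dZ=-2Z\,ds+(s+1)^{-1}dB_s$ (Proposition~\ref{prop:7435}(iii) shows the discrepancy integrates to at most $\tfrac12(t+1)^{-1}$), and then in Lemma~\ref{le:3531} time-changes $Z$ into $e^{-2s}\tilde B_{g_s}$ with $g_s\sim\tfrac14(s+1)^{-2}e^{4s}$, computes $\E\bigl[\int_t^u e^{-2s}|\bar B_s|\,ds\bigr]\sim\tfrac12\kappa\log u$ and bounds the variance by a constant, so Chebyshev plus monotonicity gives divergence a.s.\ --- no ergodic theorem needed. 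You instead rescale by $Y_t=(t+1)(r_t-1)$ so the noise becomes constant-order, regard $Y$ as a vanishing perturbation of the stationary OU process, and invoke an LLN on dyadic blocks. Both routes are morally the same (your rescaling and the paper's time-change are two pictures of the same Gaussian structure), and yours can be made rigorous, but as you yourself flag, the block-wise Gronwall coupling and the quantitative LLN for the perturbed OU carry real work that the paper sidesteps: you need an a priori almost-sure bound on $\sup_{[t/2,t]}|Y_s|$ (say via stochastic domination by an autonomous OU) before the Gronwall error $O(\sqrt{\log t}/t)$ can even be asserted, and you need a variance estimate for the block averages to convert the ergodic heuristic into an a.s.\ statement via Borel--Cantelli. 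The paper's time-change reduces all of this to a two-line moment computation for Brownian motion, which is the more elementary and self-contained route; your reformulation is cleaner conceptually but trades that for technical overhead in precisely the step you leave open.
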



\begin{figure}
	\begin{minipage}[b]{.25\linewidth} 
		\includegraphics[width=\linewidth]{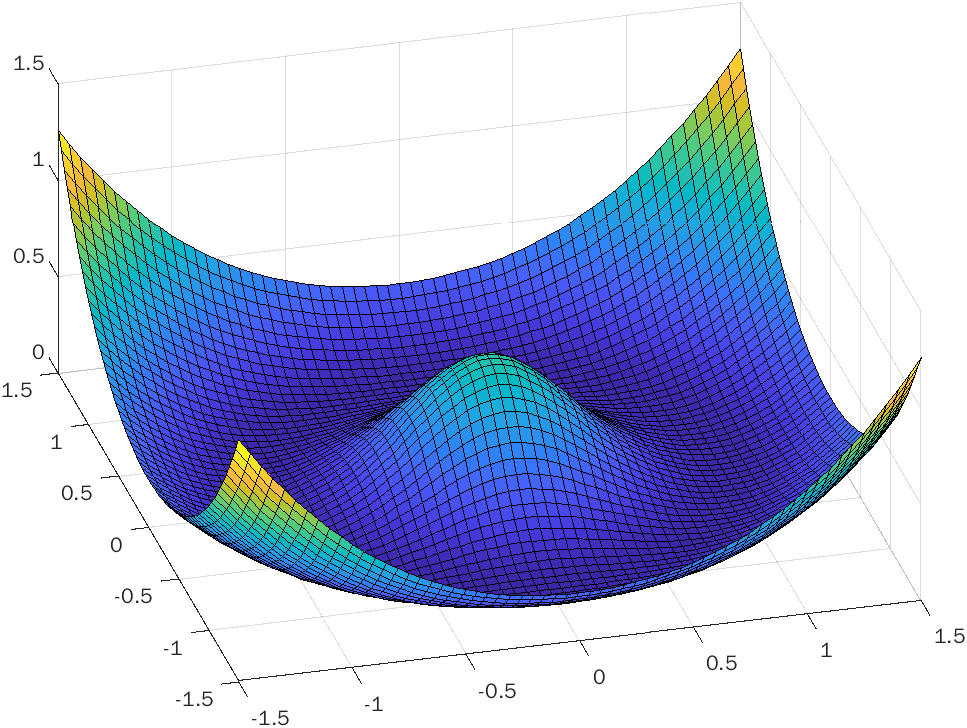}
		\caption{(a) Graph of $F$}
	\end{minipage}
	\hspace{.05\linewidth}
	\begin{minipage}[b]{.25\linewidth} 
		\includegraphics[width=\linewidth]{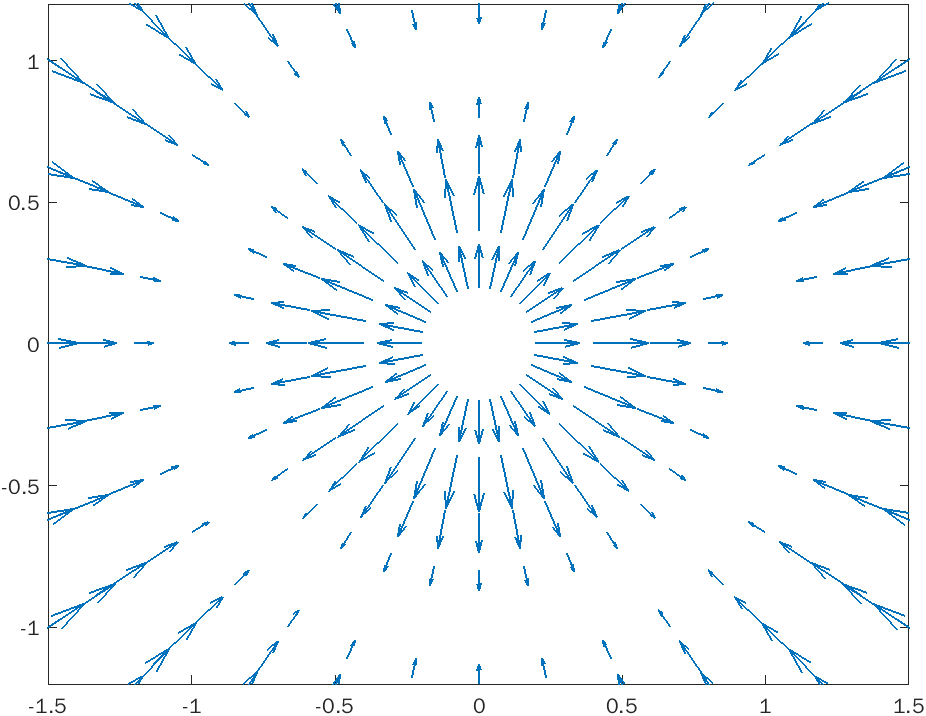}
		\caption{(b) Negative gradient field}
	\end{minipage}
	\hspace{.05\linewidth}
	\begin{minipage}[b]{.25\linewidth} 
		\includegraphics[width=\linewidth]{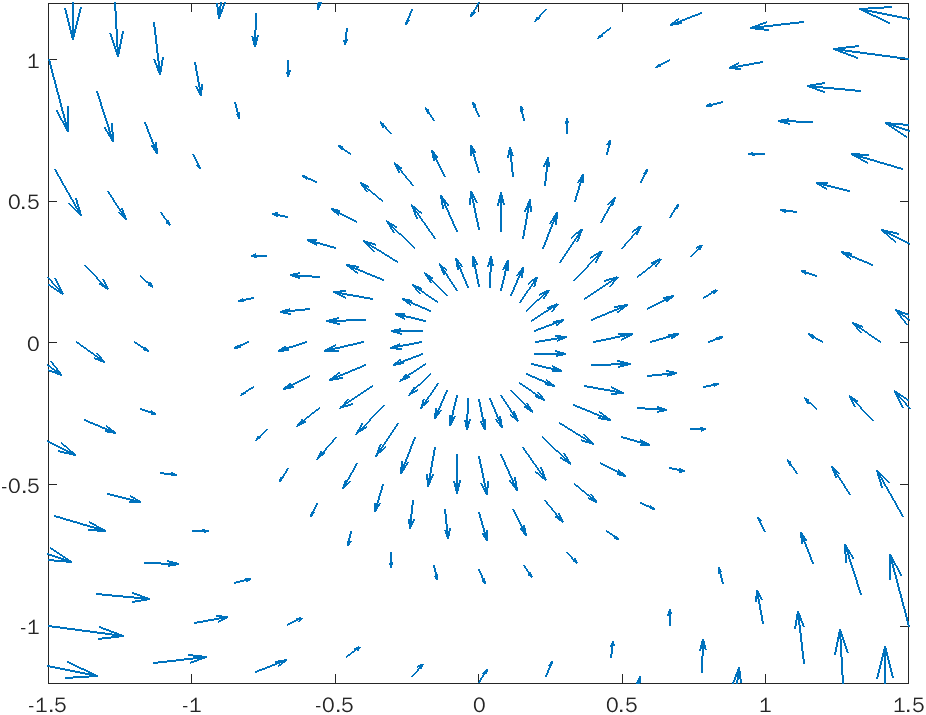}
		\caption{(c) Vectorfield of the drift}
	\end{minipage}
\end{figure}

The proof is based on the following proposition.

\begin{prop}\label{prop:7435}	For $t\ge 0$ consider the stopping time
	$$
	T_t:=\inf\bigl\{s\ge t: X_s\not \in B(0,2)\backslash B(0,\sfrac {2}{ 3})\bigr\}.
	$$
	\begin{enumerate}
		\item[(i)] $\P(\IC \cap \IL)=1$ and, almost surely, $\lim_{t\to\infty} |X_t|=1$. In particular, almost surely, for all  sufficiently large $t$, $T_t=\infty$.
		\item[(ii)] For every $t\ge0$, $(\bar Z_s)_{s\ge t}:=(|X_s|-1)_{s \ge t}$ solves on the random time interval $[t,T_t]$ the SDE
		$$
		d\bar Z_s=\Bigl(-2 \bar Z_s+\frac 12 \frac1{\bar Z_s+1}\frac 1{(s+1)^2}\Bigr)\,ds +\frac 1{s+1}dB_s
		$$
		where $B_s:=\int_t^s  \frac 1{|X_u|} \langle X_u, dW_u\rangle$ is a Brownian motion.
		\item[(iii)] For $t\ge 0$ let $(Z_s^{(t)})_{s\ge t}$ denote the solution to 
		\begin{align}\label{eq83}
		Z^{(t)}_t=\bar Z_t\text{ \ and \ } dZ^{(t)}_s=-2 Z^{(t)}_s\,ds +\frac 1{s+1}dB_s
		\end{align}
		Then,
		$$
		\int_t^{T_t} |\bar Z_s- Z^{(t)}_s|\, ds \le \frac 12 \frac 1{t+1}.
		$$
	\end{enumerate}
\end{prop}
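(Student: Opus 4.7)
I would verify the three parts in the order (ii), (iii), (i), since the Itô identity in (ii) supplies the SDE used in (iii), and both feed into (i). For (ii), I would apply Itô's formula to $g(x)=|x|$ on the random interval $[t,T_t]$. In the annulus $B(0,2)\setminus B(0,\tfrac{2}{3})$ one has $F(x)=(1-|x|)^2$, hence $f(x)=2(|x|-1)x/|x|$, and the cut-off satisfies $\varphi(x)\equiv 1$. Three facts drive the computation: (a) $\mathrm{ort}(x)\perp x$, so the tangential part of the drift contributes nothing to $\rd|X_s|$; (b) in dimension two $\nabla g(x)=x/|x|$ and $\tr(\Hess g(x))=1/|x|$, producing the Itô correction $\tfrac{1}{2(s+1)^2|X_s|}$; (c) by Lévy's characterization $B_s$ is a standard Brownian motion, since $\langle B\rangle_s=s-t$. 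Substituting and using $|X_s|=\bar Z_s+1$ gives the stated SDE.

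For (iii), I would subtract the two SDEs. The $(s+1)^{-1}\,\rd B_s$ terms cancel, leaving the pathwise linear ODE
\begin{align*}
\rd(\bar Z_s - Z^{(t)}_s) = -2(\bar Z_s - Z^{(t)}_s)\,\rd s + \tfrac{1}{2(\bar Z_s+1)(s+1)^2}\,\rd s
\end{align*}
with initial value $0$ at $s=t$. Variation of constants yields $\bar Z_s - Z^{(t)}_s = \int_t^s e^{-2(s-u)}\tfrac{1}{2(\bar Z_u+1)(u+1)^2}\,\rd u$. On $[t,T_t]$ one has $\bar Z_u+1=|X_u|\ge 2/3$, so $(\bar Z_u+1)^{-1}\le 3/2$. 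Integrating this expression over $s\in[t,T_t]$ and swapping the order of integration by Fubini produces the bound $\int_t^{T_t}|\bar Z_s-Z^{(t)}_s|\,\rd s\le \tfrac{3}{8}(t+1)^{-1}\le \tfrac{1}{2}(t+1)^{-1}$.

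For (i), I would proceed in three substeps. First, locality ($\IL$): for $|x|\ge 3$ the cut-off vanishes and the radial ODE speed $-2(|X_t|-1)$ is strictly negative, so by a comparison argument on the transition annulus $|X_t|$ stays bounded. Second, Itô applied to $F(X_t)$ gives, using $\langle f(X_t),\mathrm{ort}(X_t)\rangle=0$,
\begin{align*}
\rd F(X_t) = -|f(X_t)|^2\,\rd t + \tfrac{\varphi^2(X_t)}{2(t+1)^2}\tr(\Hess F(X_t))\,\rd t + \rd M_t
\end{align*}
with $\langle M\rangle_\infty<\infty$ and an integrable finite-variation correction, so Theorem~\ref{theo1} yields convergence of $F(X_t)$ and $f(X_t)\to 0$, forcing $|X_t|$ onto the critical set $\{0,1\}$. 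Third, to rule out $|X_t|\to 0$: inside $B(0,\tfrac{1}{2})$ the cut-off $\varphi$ vanishes and the SDE degenerates to an ODE with radial speed $6|X_t|-8|X_t|^2>0$ for $0<|X_t|<\tfrac{1}{2}$, so any trajectory starting from $x_0\neq 0$ cannot converge to the origin. Verifying $\IC$ is then immediate from $\mathrm{ort}(X_t)\perp X_t$ (forcing $\langle f(X_t),-\alpha_t\rangle/|f(X_t)|^2\equiv 1$), from $|f(X_t)|/|\alpha_t|=2/\sqrt{4+|X_t|^2}$ near $|X|=1$, and from $|\beta_t|\le (t+1)^{-1}$ with $|\beta_t|_F^2\le 2(t+1)^{-2}$. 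The main obstacle is this last substep excluding $|X_t|\to 0$; the fortunate vanishing of $\varphi$ on $B(0,\tfrac{1}{2})$ is what lets one conclude via a deterministic ODE rather than a delicate stochastic stability analysis near the unstable critical point at the origin.
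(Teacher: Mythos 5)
Your proposal is correct and follows essentially the same route as the paper: Itô's formula applied to $|x|$ on the stopping interval for (ii), subtraction plus variation of constants and Fubini for (iii), and confinement to the annulus $\overline{B(0,3)}\setminus B(0,\tfrac12)$ combined with Theorem~\ref{theo1} for (i). The only cosmetic difference is in (iii), where the paper uses the looser bound $(\bar Z_u+1)^{-1}\le 2$ rather than your $\le 3/2$, both yielding the claimed $\tfrac12(t+1)^{-1}$; in (i) you make the exclusion of $|X_t|\to 0$ slightly more explicit via the deterministic radial ODE inside $B(0,\tfrac12)$, which the paper treats implicitly by asserting eventual confinement to the annulus.
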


\begin{proof} (i):  For the choice
		$$
			\alpha_t := -f(X_t)+ \mathrm{ort}(X_t) \,  ||X_t|-1| \text{ \ and \ }
			\beta_t:= \frac 1{t+1} \varphi(X_t) \,\II_2
		$$
		 $(X_t)_{t\ge 0}$ solves the SDE~(\ref{SDE}).  We verify that the events $\IB$ and $\IC$ are almost sure events. Note that the diffusivity on $B(0,\frac 12)\cup B(0,3)^c$ is zero and that the vector field pushes $(X_t)_{t \ge 0}$ towards values in $\overline {B(0,3)}\backslash B(0,\frac12)$. Hence, from a deterministic time, say $t_0$,  onwards the process $(X_t)_{t \ge 0}$ stays on the compact set  $\overline {B(0,3)}\backslash B(0,\frac12)$. On the latter compact set $F$, $\nabla F$ and the Hessian $H$ of $F$ are uniformly bounded since $F$ is $C^2$. Therefore, $\IB$ is an almost sure set.
		 
		 For $t\ge t_0$, by orthogonality of $f$ and $\mathrm{ort}$ one has
		 $$
		 \langle f(X_t),-\alpha_t\rangle =|f(X_t)|^2
		 $$
		 and, additionally,
		 $$
		 |\alpha_t|=|-f(X_t)+\mathrm{ort}(X_t) ||X_t|-1|| \le \underbrace{|f(X_t)|}_{=2|1-|X_t||} +\underbrace{|X_t|}_{\le 3}\,||X_t|-1|\le 5|1-|X_t||=\frac 52 |f(X_t)|
		 $$ 
		 Moreover, 
		 $$
		 \int_{0}^\infty |\beta_s|_F^2\,ds \le \int_0^\infty \frac 1{(s+1)^2} |\II_2|_F^2\,ds =2  \int_0^\infty \frac 1{(s+1)^2}\,ds<\infty.
		 $$
		 Consequently, $\IC$ is an almost sure event. By Theorem~\ref{theo1}, $(f(X_t))_{t\ge0}$ converges, almost surely, to zero so that $\lim_{t\to\infty}|X_t|=1$, almost surely.

	(ii): 	Note that $|\cdot|: \R^2 \setminus\{0\} \to \R$ is $C^\infty$ and, on $[t, T_t]$, $-f(X_t)=-2\bar Z_t \frac{X_t}{|X_t|}$ as well as $\langle X_t, \mathrm{ort}(X_t)\rangle =0$, so that with the Itô-formula we get 
	\begin{align*}
		d\bar Z_s &= \frac{1}{|X_s|} \langle X_s, dX_s\rangle +\frac 12 \frac{1}{(s+1)^2} \frac{1}{|X_s|}ds \\
		&= \Bigl(-2 \bar Z_s+\frac 12 \frac1{\bar Z_s+1}\frac 1{(s+1)^2}\Bigr)\,ds +\frac 1{s+1}dB_s.
	\end{align*}

	(iii): Fix $t\ge0$ and consider $(\Upsilon^{(t)}_s)_{s \ge t}:=(\bar Z_s- Z^{(t)}_s)_{s\ge t}$. Then on $[t,T_t]$
	$$
	d\Upsilon^{(t)}_s=-2\Upsilon_s^{(t)} \,ds+\frac 12 \frac1{\bar Z_s+1}\frac 1{(s+1)^2}\,ds.
	$$ 
	With $\Upsilon^{(t)}_t=0$ we thus get that for $t\le s\le T_t$
	$$
	\Upsilon_s^{(t)}=\frac 12 \int_t^s e^{-2(s-u)}   \frac1{\bar Z_u+1}\frac 1{(u+1)^2}  \, du\le  \int_t^s e^{-2(s-u)}  \frac 1{(u+1)^2}  \, du,
	$$
	where we used that for  $t\le u\le T_t$, $ \frac1{\bar Z_u+1}\le 2$. We thus get with Fubini that 
	\begin{align*}
	\int_{t}^{T_t} |\bar Z_s-Z_s^{(t)}|\, ds&\le \int_t^\infty \int_t^s  e^{-2(s-u)}  \frac 1{(u+1)^2}  \, du\, ds\\
	&= \int_t^\infty \underbrace{ \int_u^\infty  e^{-2(s-u)}    \, ds}_{=\frac 12}\, \frac 1{(u+1)^2}\, du
	=\frac 12 \frac 1{t+1}.
	\end{align*}
\end{proof}

Now, let $t\ge 0$ and $x_0\in\R$ and let $(Z_s)_{s \ge t}$ be the solution of the SDE
\begin{align}\label{eq83745}
\rd Z_s = -2 Z_s \, \rd s + (s+1)^{-1}\, \rd B_s, \text{ with  }Z_{t}=x_0.
\end{align}
Using Itô's lemma and the Dubins-Schwarz theorem it is straight-forward to see that one can represent the solution as
$$
(Z_s)_{s \ge t} = (e^{-2s}\tilde B_{g_s})_{s \ge t},
$$
where $(\tilde B_s)_{s \ge g_t}$ is a Brownian motion started at time $g_{t}$  in  $\tilde B_{g_{t}} =e^{2t} x_0$ and $(g_s)_{s\ge0}$ is given  by
$$
g_s:= \int_0^s (u+1)^{-2}e^{4u}\,\rd u \sim \frac 14 (s+1)^{-2} e^{4s}.
$$
Note that $g_s\le \frac{e^{4s}}{(s+1)^2}$.

\begin{lemma}\label{le:3531}For every $t\ge 0$ and $x_0\in\R$ the solution $(Z_s)_{s \ge t}$ of~(\ref{eq83745}) satisfies
	$$
	\int_{t}^\infty |Z_s|\, \rd s=\infty, \qquad \text{almost surely.}
	$$
\end{lemma}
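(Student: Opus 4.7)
My plan is to exploit the Dubins--Schwarz representation $Z_s = e^{-2s}\tilde B_{g_s}$ stated just before the lemma to reduce the divergence claim to an almost sure statement for a functional of Brownian motion, which is then handled by the ergodic theorem for the Ornstein--Uhlenbeck process. To detach the (possibly nonzero) starting point, I would write $\tilde B_{g_s} = e^{2t}x_0 + W_{g_s - g_t}$, where $W_r := \tilde B_{g_t+r}-\tilde B_{g_t}$ is a standard Brownian motion with $W_0 = 0$; since $|e^{2t}x_0|\int_t^\infty e^{-2s}\,ds < \infty$, it suffices to show
\[
\int_t^\infty e^{-2s}|W_{g_s-g_t}|\,ds = \infty \qquad \text{almost surely}.
\]

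Next I substitute $\tau = g_s - g_t$, which gives $d\tau = g'(s)\,ds = (s+1)^{-2}e^{4s}\,ds$ and hence $e^{-2s}\,ds = (s+1)^2 e^{-6s}\,d\tau$. Using $g_s \sim \tfrac14 e^{4s}/(s+1)^2$ (recorded in the paragraph preceding the lemma) I get $e^{4s} \asymp \tau(s+1)^2$ and $s\sim \tfrac14\log \tau$, so that $(s+1)^2 e^{-6s} \ge c/(\tau^{3/2}\log \tau)$ for $\tau$ sufficiently large and some $c>0$. A further substitution $\tau = e^u$ then converts the remaining integral (up to a positive multiplicative constant) into $\int_{u_0}^\infty u^{-1}|U_u|\,du$, where $U_u := e^{-u/2}W_{e^u}$.

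A short It\^o computation shows $U$ satisfies $dU_u = -\tfrac12 U_u\,du + d\hat W_u$ for some standard Brownian motion $\hat W$, so $U$ is an Ornstein--Uhlenbeck process; moreover $U_{u_0} = W_{e^{u_0}}/e^{u_0/2}$ is $N(0,1)$-distributed, matching the invariant measure, so $U$ is stationary and ergodic (its covariance $e^{-|h|/2}$ decays to $0$, so Doob's criterion applies). By Birkhoff's ergodic theorem, $\tfrac{1}{T}\int_{u_0}^{u_0+T}|U_u|\,du\to m := \sqrt{2/\pi}$ almost surely, so for $T$ large $\int_T^{2T}u^{-1}|U_u|\,du \ge \tfrac{1}{2T}\int_T^{2T}|U_u|\,du \ge m/4$, and summing along dyadic scales $T = 2^n$ forces $\int u^{-1}|U_u|\,du = \infty$ a.s., completing the chain of reductions.

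The main obstacle I anticipate is the bookkeeping in the Jacobian asymptotic: although the relations $s\sim \tfrac14\log\tau$ and $(s+1)^2 e^{-6s}\asymp 1/(\tau^{3/2}\log\tau)$ are clear in the limit, I need a clean nonasymptotic lower bound $(s+1)^2 e^{-6s}\ge c/(\tau^{3/2}\log\tau)$ valid from some explicit $\tau$ onward so that the subsequent inequality chain is unconditional. The remaining steps (identifying $U$ as OU, stationarity, and Birkhoff) are standard.
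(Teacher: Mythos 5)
Your argument is correct, but it takes a genuinely different route from the paper. After the common reduction $Z_s = e^{-2s}\tilde B_{g_s}$ and detaching the starting point, you change variables twice ($\tau = g_s - g_t$, then $\tau = e^u$) to recast the integral as $\int_{u_0}^\infty u^{-1}|U_u|\,\rd u$ for the stationary Ornstein--Uhlenbeck process $U_u = e^{-u/2}W_{e^u}$, and then invoke Birkhoff's ergodic theorem (the OU covariance $e^{-|h|/2}\to 0$ gives mixing) together with a dyadic summation. The paper instead stays in the original time variable: it computes $\E\bigl[\int_t^u e^{-2s}|\bar B_s|\,\rd s\bigr]\sim \tfrac12\kappa\log u\to\infty$ using $\E|\bar B_s| = \kappa\sqrt{g_s-g_t}$, bounds the variance of $\int_t^\ell e^{-2s}|\bar B_s|\,\rd s$ uniformly by $1$ via the covariance estimate $\cov(|\bar B_u|,|\bar B_v|)\le g_v - g_t$, and then concludes by Chebyshev and monotonicity. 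Your approach is conceptually cleaner --- it exposes the divergence as the harmonic-weight integral of a bounded ergodic average, and the $\log$-rate of the paper's first-moment computation is exactly the shadow of that $u^{-1}$ weight --- at the cost of the Jacobian bookkeeping you flag and the appeal to ergodic theory. The paper's approach is more elementary and self-contained (only Gaussian moment identities plus Chebyshev), and it sidesteps any need for nonasymptotic control of $g_s$. The obstacle you anticipate is genuine but minor: since $g_s/\bigl(\tfrac14(s+1)^{-2}e^{4s}\bigr)\to 1$ and $g_t$ is a fixed constant, a uniform two-sided bound $c_1 \le \tau(s+1)^2 e^{-4s}\le c_2$ holds for all $s\ge s_1$ and yields the needed nonasymptotic inequality $(s+1)^2 e^{-6s}\ge c/(\tau^{3/2}\log\tau)$ from some finite $\tau_0$ onward, which is all the dyadic argument requires.
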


\begin{proof} We let for $s\ge t$, $\bar B_s:=\tilde B_{g_s}-e^{2t} x_0$ and  note that 
	$$
	\Bigl|\int_{t}^u|Z_s|\,\rd s - \int_{t}^u |e^{-2s}\bar B_s|\,\rd s\Bigr| \le |x_0| \int_{0}^{\infty}e^{-2s}\,\rd s=\sfrac 12 |x_0|. 
	$$
	Hence, it suffices to show that $\int_{t}^\infty|e^{-2s}\bar B_s|\,\rd s=\infty$, almost surely.
	Set 
	$$\kappa := \int_\R \frac1{\sqrt{2\pi}} |x|\,e^{-x^2/2}\,\rd x=\sqrt{\frac2\pi}.$$
	One has
	\begin{align*}
	\E\Bigl[\int_{t}^u e^{-2s} |\bar B_s| \, \rd s \Bigr]&= \int_{t}^u e^{-2s} \E[|\bar B_{s}|] \ \rd s = \kappa \int_{t}^u e^{-2s}\sqrt{g_s-g_{t}} \, \rd s .
	\end{align*}
	Using that $g_s\sim\frac 14 (s+1)^{-2} e^{4s}$ we get that
	$$
		\kappa \int_{t}^u e^{-2s}\sqrt{g_s-g_{t}} \, \rd s\sim \sfrac 12 \kappa\int_{t}^u (s+1)^{-1}\,\rd s\sim \sfrac 12 \kappa\log u.
	$$
	%
	To estimate the variance we first show that for  $v\ge u \ge t$ we have
	\begin{align}\label{eq8476}
	\cov(|\bar B_u|, |\bar B_v|)\le g_v- g_{t}.
	\end{align}
	Indeed,
	\begin{align*}
	\E[|\bar B_v \bar B_u|] &\le \E[\bar B_u^2] + \E[|\bar B_v-\bar B_u|]\, \E[|\bar B_u|] \\
	&\le g_u-g_{t} + \kappa^2  \sqrt{g_v-g_u} \sqrt{g_u-g_{t}}
	\end{align*}
	and
	$$
	\E[|\bar B_v|] \,\E[|\bar B_u|]= \kappa^2 \sqrt{g_v-g_{t}} \sqrt{g_u-g_{t}}.
	$$
	Thus (\ref{eq8476}) follows since $g_u\ge g_{t}$. We conclude that for all $\ell\ge t$	
	\begin{align*}
	\var\Bigl(\int_{t}^\ell e^{-2s} |\bar B_s| \, \rd s \Bigr) &= \int_{t}^\ell \int_{t}^\ell e^{-2(u+v)}\cov(|\bar B_u|,|\bar B_v|) \ \rd u \,  \rd v \\
	&  \le 2 \int_{t}^\ell  \int_u^\ell e^{-2(u+v)} g_u \ \rd v \,  \rd u  \\
	& \le  \int_0^\ell  e^{-4u} g_u \,  \rd u  \le \int_0^\ell  \frac{1}{(u+1)^2} \,  \rd u \le 1.
	\end{align*}
	Hence, as a consequence of the Chebyshev inequality one gets that
	$$
	\int_{t}^\infty e^{-2s} |\bar B_s| \, \rd s =\lim_{u\to\infty} \int_{t}^u e^{-2s} |\bar B_s| \, \rd s =\infty,\quad\text{almost surely.}
	$$		
\end{proof}	
\black

\begin{proof}[Proof of Theorem~\ref{thm:counter}]
(i): We first verify that $F$ is a \loja-function. 
We  need to analyse the critical points only. One has 
$$
\Hess F(0)= \begin{pmatrix}
	-6 & 0 \\ 0 & -6
\end{pmatrix}
$$
so that the Hessian has full rank at $0$ which implies validity of a \loja-inequality with $\theta=\frac 12$ on an appropriate neighbourhood of $0$. Next, let $x\in\R^2$ with $|x|=1$. Since  $\psi''(1)=2$,  $\psi$ satisfies a \loja-inequality
$$
|\psi'(r)|\ge \L |\psi(r)-\psi(1)|^{\frac 12}
$$
for all $r\in[1-\eps,1+\eps)$ and appropriately fixed $\eps\in(0,\frac 12]$ and $\L>0$. Consequently, for $y\in B(0,1+\eps)\backslash B(0,1-\eps)$ one has 
$$
|f(y)|=|\psi'(|y|)|\ge \L\, |\psi(|y|)-\psi(1)|^{1/2} =\L\, |F(y)-F(x)|^{1/2}.
$$

(ii): has been shown in Prop.~\ref{prop:7435}.

(iii): We conceive $(X_t)_{t \ge 0}$ as complex-valued process by letting
$$
Y_t:= X^{(1)}_t+i X^{(2)}_t
$$
and note that (since $(Y_t)_{t \ge 0}$ does not hit $0$) there is a continuous adapted process  $(\Phi_s)_{s \ge 0}$  satisfying
	$$
		Y_s = |X_s| e^{i \Phi_s}.
	$$
	Now on $[t,T_t]$
		\begin{align*}
		d \Phi_s =  ||X_s|-1| \, d s + \frac 1{|X_s|^2}  \frac{1}{s+1} \langle \mathrm{ort}(X_s),  dW_s \rangle
	\end{align*}
	so that 
	$$
	d\langle \Psi\rangle_s=\frac 1{|X_s|^2} \frac1{(s+1)^2}\, ds.
	$$
	We thus get that 
	$$
	\int_{t}^{T_t} d\langle \Psi\rangle_s =  \int_{t}^{T_t} \frac 1{|X_s|^2} \frac1{(s+1)^2} \,ds\le \sfrac 94 \int_t^\infty \frac1{(s+1)^2} \,ds<\infty
	$$
	so that $\int_t^{T_t\wedge s} \frac 1{|X_u|^2}  \frac{1}{u+1} \langle \mathrm{ort}(X_u),  dW_u \rangle$ converges almost surely as $s\to \infty$, say to $\bar \Psi^{(t)}$. Consequently, on $\{T_t=\infty\}$ one has as $s\to \infty$
	$$
	\Phi_s =\int_t^s  ||X_u|-1| \, d u + \bar \Phi^{(t)}+o(1),
	$$
	where $o(1)$ stands for a term converging  to zero as $s\to\infty$. By Proposition~\ref{prop:7435} (iii) and  Lemma~\ref{le:3531}, $\int_t^s  ||X_u|-1| \, d u\to\infty$ as $s\to\infty$ which shows that $(X_t)_{t \ge 0}$ does not converge on $\{T_t=\infty\}$. The result follows by noticing that $\bigcup_{t\in\N} \{T_t=\infty\}$ is an almost sure set by Proposition~\ref{prop:7435} (i).
%
%
%
	\end{proof}

\appendix
\section{Criterion for staying local}  \label{sec:app}
	Assume that $F$ satisfies $\lim_{|x| \to \infty} F(x)=\infty$ and consider the set 
	$$
	\IL_1 := \Bigl\{ \sup\limits_{t \ge 0} \int_0^t \bigl(\langle f(X_s),\alpha_s \rangle + \frac 12 \tr (\beta_s^\dagger H(X_s) \beta_s) \bigr) \, \rd s <\infty \Bigr\}.
	$$
	We will show that $\limsup_{t \to \infty} |X_t|<\infty$, almost surely, on $\IL_1$. By Itô's formula
	$$
	\rd F(X_t)= \langle f(X_t) ,\alpha_t\rangle \,\rd t+   \langle f(X_t),\beta_t\,\rd W_t\rangle +\frac 12 \tr(\beta_t^\dagger H(X_t) \beta_t)\,\rd t.
	$$
	Now, let $n \in \N$ and consider the stopping time 
	$$
	T_n := \inf \Bigl\{ t \ge 0 : \int_0^t \bigl( \langle f(X_s),\alpha_s \rangle + \frac 12 \tr (\beta_s^\dagger H(X_s) \beta_s) \bigr) \, \rd s \ge n \Bigr\}.
	$$
	and $(Y_t)_{t \ge 0}$ given by
	$$
	Y_t := F(X_t)- \int_0^t \bigl( \langle f(X_s),\alpha_s \rangle + \frac 12 \tr (\beta_s^\dagger H(X_s) \beta_s) \bigr) \, \rd s.
	$$
	Then, $(Y_t^{T_n})_{t \ge 0}$ is a local martingale that is bounded from below and, thus, a supermartingale. With the martingale convergence theorem we get almost sure convergence of $(Y_t^{T_n})_{t \ge 0}$ and, thus, on $\{T_n = \infty\}$ we have that $\sup_{t\ge0} F(X_t)$ is finite, almost surely. Finally, note that
	$$
	\IL_1 = \bigcup\limits_{n \in \N} \{ T_n = \infty\},
	$$
	so that $\limsup_{t \to \infty} |X_t|<\infty$, almost surely, on $\IL_1$. Note that for the choice $(\alpha_t)_{t\ge0} = (-f(X_t))_{t \ge 0}$ we have
	$$
	\Bigl\{ \sup\limits_{t \ge 0}|H(X_t)|<\infty \text{ and } \int_0^\infty |\beta_s|_F^2 \, \rd s<\infty \Bigr\} \subset \IL_1
	$$
	as well as for a compact set $K \subset \R^d$
	\begin{align*}
		\Bigl\{ &\1_{\{X_t \notin K\}} (-|f(X_t)|^2+\frac 12 \tr ((\beta_t)^\dagger H(X_t) \beta_t) )\le 0  \text{ for all large } t \\ 
		&\text{ and } \int_0^\infty \1_{\{X_s \in K\}}|\beta_s|_F^2 \, \rd s <\infty \Bigr\}\subset \IL_1.
	\end{align*}

{\bf Acknowledgement.}
The authors would like to thank the two anonymous referees for their valuable comments.
Funded by the Deutsche Forschungsgemeinschaft (DFG, German Research Foundation) under Germany's Excellence Strategy EXC 2044--390685587, Mathematics Münster: Dynamics--Geometry--Structure.


\end{document}